\def\argmin{\mathop{\text{argmin}}}
\def\mig{\frac{1}{2}}
\def\bigO{\mathcal{O}}
\journalname{Journal of Scientific Computing}
\begin{document}
\title{High order boundary extrapolation technique for finite difference methods
  on complex domains with Cartesian meshes}
\titlerunning{High order boundary extrapolation}
\author{A.~Baeza \and P.~Mulet \and D.~Zorío}
\date{}

\institute{
Departament de Matemàtica Aplicada,  Universitat de
  València    (Spain); emails: antonio.baeza@uv.es, mulet@uv.es, david.zorio@uv.es. This research was partially supported by Spanish MINECO grants
   MTM2011-22741 and MTM2014-54388.
}

\leavevmode\thispagestyle{empty}

\noindent This version of the article has been accepted for publication, after a peer-review process, and is subject to Springer Nature’s AM terms of use, but is not the Version of Record and does not reflect post-acceptance improvements, or any corrections. The Version of Record is available online at: \url{https://doi.org/10.1007/s10915-015-0043-2}

\newpage

\maketitle

\begin{abstract}
The application of suitable numerical boundary conditions for
hyperbolic conservation laws on domains with complex geometry has
become a problem with certain difficulty that has been tackled in
different ways according to the nature of the numerical methods and
mesh type.
In this paper we present a technique for the extrapolation of
information from the interior of the computational domain to
ghost cells designed for structured Cartesian meshes (which, as opposed to non-structured
meshes, cannot be adapted to the morphology of the domain boundary).

This technique is based on the application of Lagrange
interpolation with a filter for the detection of
discontinuities that permits a data dependent extrapolation, with
higher order at smooth regions and essentially non oscillatory
properties near discontinuities.

\keywords{
Finite difference  WENO schemes, Cartesian grids, extrapolation.
}
\end{abstract}

\section{Introduction}
Hyperbolic conservation laws and related equations have been the focus
of many research lines in the past four decades. Since no analytic
solution is known for many of these equations, different techniques
have been developed in order to tackle these problems from a numerical
point of view, with methodologies that have evolved and improved
along these years.

Some of these methods, mainly used for academic purposes, employ
Cartesian meshes on rectangular domains and numerical boundary
conditions that are based on low order extrapolation, generally first
order.
Due to the advantages of implementation and efficiency inherent to
Cartesian meshes, in our case we will focus on extending
the techniques mentioned above to domains with complex geometry (i.e.,
at least not necessarily rectangular). We will also present  a new
technique for the extrapolation of interior information to ghost
cells (cells  outside the domain, but  within the stencils of interior
points) making use of boundary conditions (if available) and interior
data near a given ghost cell. This procedure is able to detect abrupt data
changes.

Some authors have approached this problem from different perspectives.
 In \cite{Sjogreen} the authors develop a technique based on Lagrange interpolation
 with a limiter which is restricted to second order methods and
 a single ghost cell.
Also related to our approach are the works of Shu and collaborators
\cite{TanShu,TanWangShu}
where the equation to be solved is used to extrapolate derivative values of the numerical solution
to the boundary points where inflow conditions are prescribed and then approximate ghost values
by a Taylor expansion. For outflow boundaries
an extrapolation technique based on the WENO method is used, achieving high order when the data is smooth in both cases.
The drawbacks of this approach are that it is problem-dependent
(see \cite{Huang,Xiong} for a similar methodology applied to other equations), that it requires a different treatment of different
types of boundary and its relatively high computational cost.

Our approach can be understood as an extension of \cite{Sjogreen} in the sense that it is based on Lagrange extrapolation
with filters, but without imposing limitations on the order of the method
or the number of ghost cells. Further, albeit the description is made for hyperbolic conservation laws,
the procedure is agnostic about the equation and can be applied to other hyperbolic problems.
Finally, the methodology is the same for inflow and outflow boundaries, just by considering the boundary
node as an interpolation node in the case of inflow data.

The organization of the paper is the following: In section \ref{seq}
we present the equations and the numerical methods that we
consider in this paper. The details of the procedure for meshing complex domains
with Cartesian meshes are explained in section \ref{sml}. In section
\ref{sep} we expound how we perform extrapolations with the method for
the detection of singularities. Some numerical results that are
obtained with this methodology are presented in section \ref{srn},
with some simple tests in 1D to illustrate the correct behavior of the
proposed techniques and some more complex ones in 2D. Finally, some
conclusions are drawn in section \ref{scn}.

\section{Numerical schemes}\label{seq}
The equations that will be considered throughout this paper are
hyperbolic systems of $m$ two-dimensional  conservation laws
\begin{equation}\label{eq:hypsis}
u_t+f(u)_x+g(u)_y=0,\quad u=u(x,y,t),
\end{equation}
defined on an open and bounded spatial domain
$\Omega\subseteq\mathbb{R}^2$, with
Lipschitz boundary $\partial\Omega$ given by a finite union of piece-wise smooth
curves, $u:\Omega\times\mathbb{R}^+\rightarrow
\mathbb{R}^m$, and fluxes $f,g:\mathbb{R}^m\rightarrow\mathbb{R}^m$.
  These equations are supplemented with an initial condition, $u(x,
  y, 0)=u_0(x, y)$, $u_0:\Omega\rightarrow\mathbb{R}^m$,
  and different boundary conditions that may vary depending on the problem.

 \subsection{Finite difference WENO schemes}
 \label{ss:fdweno}
 Although the techniques that will be expounded in this paper are applicable
  to other numerical schemes,
  we use here Shu-Osher's finite difference conservative methods
  \cite{ShuOsher1989} with a
  WENO5 (\textit{Weighted Essentially
    Non-Oscillatory}) \cite{JiangShu96} spatial reconstruction,
  Donat-Marquina's
  flux-splitting \cite{DonatMarquina96}  and the RK3-TVD ODE
  solver \cite{ShuOsher89} in a method of lines fashion that we
  briefly describe here for the sake of  completeness. This
  combination of techniques was proposed in \cite{MarquinaMulet03}.

  We define our mesh starting from a reference vertical line,
  $x=\overline{x}$ and a horizontal one $y=\overline{y}$. Let $h_x>0$
  and  $h_y>0$ be the horizontal and vertical spacings of the mesh, so
  that the vertical lines in the mesh are determined by:
$x=x_r:=\overline{x}+rh_x$, $r\in\mathbb{Z}$ and
the horizontal ones by $ y=y_s:=\overline{y}+s h_y$,
$s\in\mathbb{Z}$. The cell with center $(x_r, y_s)$ is defined by:
\begin{equation*}
  [x_{r}-\frac{h_x}{2}, x_{r}+\frac{h_x}{2}]\times
  [y_{s}-\frac{h_y}{2}, y_{2}+\frac{h_y}{2}].
\end{equation*}
The computational domain is then given by
$$\mathcal{D}:=\left\{(x_r, y_s):\hspace{0.2cm}
(x_r,y_s)\in\Omega,\hspace{0.3cm}r,s\in\mathbb{Z}\right\}=
(\overline{x}+h_x\mathbb{Z})\times
(\overline{y}+h_y\mathbb{Z})\cap\Omega.$$
Notice that $\mathcal{D}$ is finite since  $\Omega$ is bounded.

  Shu and Osher's technique   \cite{ShuOsher89} to obtain high order
  finite difference schemes relies on the fact that, for fixed $y, t$:
  $$f(u)_x(x, y, t)=\frac{1}{h_x}\Big(\varphi(x+\frac{h_x}{2}, y ,t)-\varphi(x+\frac{h_x}{2}, y, t)\Big),
  $$
  for and implicitly defined $\varphi=\varphi^{h_x}$ that satisfies
  $$f(u(x,y,t))=\frac{1}{h_x}\int_{x-\frac{h_x}{2}}^{x+\frac{h_x}{2}}\varphi(\xi, y, t)d\xi.$$

  We can compute highly accurate approximations to
  $\varphi(x_{i\pm\frac{1}{2}}, y_j, t)$, denoted by
  $\widehat{\varphi}(x_{i\pm\mig}, y_j, t)$,
  using known grid values of $f(u)$ as
  \begin{align*}
  \varphi(x_{i+\frac{1}{2}}, y_j,
  t)&=\widehat{\varphi}(x_{i+\frac{1}{2}}, y_{j}, t)+\mathcal{O}
  (h_x^r), \\
  \widehat{\varphi}(x, y_{j}, t)&=\mathcal{R}(f_{i-p, j}(t),\ldots,f_{i+q, j}(t);x),\quad
  f_{l, j}(t)=f(u(x_l,y_j, t)),
\end{align*}
where $\mathcal{R}$ is a reconstruction procedure (a function whose
cell-averages coincide with the given ones; we use the WENO
  technique proposed in \cite{JiangShu96,SINUM2011} for five points
  stencils).
We can thus discretize the spatial derivative in \eqref{eq:hypsis} as:
\begin{equation}\label{eq:33}
\begin{aligned}
  (f(u))_x(x_{i}, y_{j}, t)=\frac{\hat{f}_{i+\frac{1}{2}, j}(t)-
    \hat{f}_{i-\frac{1}{2}, j}(t)}{h_x}+\mathcal{O}(h_x^r),\\
  \hat{f}_{i+\frac{1}{2}, j}(t)=\widehat{\varphi}(x_{i+\frac{1}{2}},
  y_j, t)=\hat{f}(u(x_{i-p}, y_{j}, t),\ldots,u(x_{i+q}, y_{j}, t)).
\end{aligned}
\end{equation}
Similarly
\begin{equation}\label{eq:34}
(g(u))_y(x_{i}, y_{j}, t)=\frac{\hat{g}_{i, j+\frac{1}{2}}(t)-
    \hat{g}_{i, j-\frac{1}{2}}(t)}{h_y}+\mathcal{O}(h_y^r).
\end{equation}

The numerical approximations in \eqref{eq:33} and \eqref{eq:34} must
be modified when some of the nodes involved in their computations
are not in $\Omega$. As we will detail in section \ref{sml} and \ref{sep},
we use extrapolation from interior data and possible boundary
conditions to obtain approximations
\begin{equation}\label{eq:35}
\widetilde{u}(x_r, y_s, t)=u(x_r, y_s, t)+\bigO(h_x^s),
\end{equation}
for  $(x_r, y_s)\notin\Omega$, should $u$ be defined and smooth enough
at an open set containing the stencil involved in the extrapolation
and the evaluation point $(x_r, y_s)$.
For the sake of the exposition, let us
assume that in \eqref{eq:33} $(x_{i-p-1}, y_j)\notin \Omega$ and
$(x_{i-p}, y_j),\dots, (x_{i+q}, y_j)\in \Omega$ and denote by
$v_{r}=u(x_r, y_{j}, t)$, $\widetilde{v}_{r}=\widetilde{u}(x_r, y_{j},
t)$.
  Then the
approximation in \eqref{eq:33} that corresponds to the numerical
scheme with the extrapolated value $\widetilde{u}(x_{i-p-1}, y_j, t)$
satisfies,  under mild smoothness conditions,
\begin{multline}\label{eq:36}
\frac{\hat{f}(v_{i-p},\ldots,v_{i+q})-
  \hat{f}(\widetilde{v}_{i-p-1},\ldots,v_{i+q-1})
}{h_x}\\
=
\frac{\hat{f}(v_{i-p},\ldots,v_{i+q})-
  \hat{f}(v_{i-p-1},\ldots,v_{i+q-1})
}{h_x}+\bigO(h_x^{s-1})\\=
(f(u))_x(x_{i}, y_{j}, t)+\mathcal{O}(h_x^{r'}),
\end{multline}
with $r'=\min(r, s-1)$, as it  will be seen in Appendix \ref{ap:accuracy}. The
same conclusion holds if other arguments are replaced by suitable
extrapolations.
We will see in the numerical experiments how the choice of $s$ affects
the accuracy of the solution. The issue of
stability of the  scheme with extrapolation at the boundary will be
tackled in Appendix  \ref{ap:stability} in a simple  one-dimensional setup
that will serve as guide for the design of the multidimensional
extension of extrapolation.

  The spatial discretization of  problem \eqref{eq:hypsis} reads as:
  \begin{align*}
  u_{i,j}'(t)&=-\mathcal{L}(u(t))_{i, j},\quad
  u_{i,j}=[u_{1,i,j},\ldots,u_{m,i,j}]^T,\\
  \mathcal{L}(u)_{i, j}&=g
  \frac{\hat{f}_{i+\frac{1}{2},j}-\hat{f}_{i-\frac{1}{2},j}}{h_x}+
  \frac{\hat{g}_{i,j+\frac{1}{2}}-\hat{g}_{i,j-\frac{1}{2}}}{h_y},
\end{align*}
for approximations $u_{i, j}(t)\approx u(x_i,y_j, t)$ and
it can be solved using an   appropriate ODE solver. In this paper, we
use the third order TVD
  Runge-Kutta scheme proposed in \cite{ShuOsher89}:
  \begin{equation}\label{rk3tvd}
    \left\{\begin{array}{rcl}
      u^{(1)}&=&u^n-\Delta t\mathcal{L}(u^n),\\
      u^{(2)}&=&\frac{3}{4}u^n+\frac{1}{4}u^{(1)}-\frac{1}{4}\Delta
      t\mathcal{L}(u^{(1)}),\\
      u^{n+1}&=&\frac{1}{3}u^n+\frac{2}{3}u^{(2)}-\frac{2}{3}\Delta
      t\mathcal{L}(u^{(2)}),\\
    \end{array}\right.
  \end{equation}
  for $u_{i, j}^n\approx u_{i,j}(t_n)\approx u(x_i, y_j, t_n)$.

  To extend these schemes to systems of conservation laws
  we can compute the numerical flux $\hat{f}_{i+\frac{1}{2}}$ (we drop
  the $j$ subscript for simplicity) by using
  a fifth order Donat-Marquina's flux-splitting \cite{DonatMarquina96}:
  \begin{equation}\label{char}
    \begin{split}
    \hat{f}_{i+\frac{1}{2}}=\sum_{k=1}^mr^{+,k}\left(\mathcal{R}^+
    \left(l^{+,k}\cdot f_{i-2}^{+,k},\ldots,l^{+,k}\cdot
      f_{i+2}^{+,k};x_{i+\frac{1}{2}}\right)\right) \\
  +\sum_{k=1}^mr^{-,k}\left(\mathcal{R}^-
    \left(l^{-,k}\cdot f_{i-1}^{-,k},\ldots,l^{-,k}\cdot
      f_{i+3}^{-,k};x_{i+\frac{1}{2}}\right)\right),
    \end{split}
  \end{equation}
  where $f^{\pm,k}_l=f^{\pm,k}(u_l)$ as defined below,
  $r^{\pm,k}=r^k(u_{i+\frac{1}{2}}^{\pm}),l^{\pm,k}=l^k(u_{i+\frac{1}{2}}^{\pm})$
  are the right and left normalized eigenvectors corresponding to the
  eigenvalue $\lambda_k(f'(u_{i+\frac{1}{2}}^{\pm}))$ of the flux
  Jacobian $f'(u_{i+\frac{1}{2}}^{\pm})$, respectively, computed at
  $u_{i+\frac{1}{2}}^{\pm}$, where
  $$u_{i+\frac{1}{2}}^{+}=\mathcal{I^+}(u_{i-2},\ldots,u_{i+2};x_{i+\frac{1}{2}}),
  \hspace{0.3cm}
    u_{i+\frac{1}{2}}^{-}=\mathcal{I^-}(u_{i-1},\ldots,u_{i+3};x_{i+\frac{1}{2}}),$$
    for some interpolators $I^{\pm}$.
  The functions $f^{\pm,k}$  satisfy $f^{+,k}+f^{-,k}=f$,
  $\pm\lambda_k((f^{\pm,k})'(u))>0$ for $u$ in some relevant range
  $\mathcal{M}_{i+\frac{1}{2}}$ near $u_{i+\mig}^{\pm}$, and are given by:
  \begin{align*}
    (f^{-,k}, f^{+,k})(v)&=
    \begin{cases}
      (0, f(v)) & \lambda_k(f'(u)) > 0,\quad\forall u\in \mathcal{M}_{i+\mig}\\
      (f(v), 0) & \lambda_k(f'(u)) < 0,\quad\forall u\in \mathcal{M}_{i+\mig}\\
      (\mig(f(v)- \alpha_{i+\frac{1}{2}}^{k} v), \mig(f(v)+ \alpha_{i+\frac{1}{2}}^{k} v)),&\exists u\in
      \mathcal{M}_{i+\mig} /  \lambda_k(f'(u)) =      0,
    \end{cases}
  \end{align*}
 where $\alpha_{i+\frac{1}{2}}^{k}\geq |\lambda_k(f'(u))|$ for $u\in \mathcal{M}_{i+\frac 12}$. For the
  Euler equations we can simply take $\mathcal{M}_{i+\frac 12}=\{u_i, u_{i+1}\}$.

\section{Meshing procedure}\label{sml}
\subsection{Ghost cells}
We recall that WENO schemes of order $2k-1$ use an stencil
(consecutive indexes) of  $2k$
points, therefore $k$ additional cells are needed at both sides of
each horizontal and vertical mesh line in order to perform a time
step. These additional cells are usually named \textit{ghost cells} and, in
terms of their centers, are given by:
$$\mathcal{GC}:=\mathcal{GC}_x\cup\mathcal{GC}_y,$$
where
$$\mathcal{GC}_x:=\left\{(x_r, y_s):\hspace{0.2cm}0<d\left(x_r,\hspace{0.1cm}\Pi_x\left(\mathcal{D}\cap\left(\mathbb{R}\times\{y_s\}\right)\right)\right)\leq kh_x,\hspace{0.3cm}r,s\in\mathbb{Z}\right\},$$
$$\mathcal{GC}_y:=\left\{(x_r,
  y_s):\hspace{0.2cm}0<d\left(y_s,\hspace{0.1cm}\Pi_y\left(\mathcal{D}\cap\left(\{x_r\}\times\mathbb{R}\right)\right)\right)\leq
  kh_y,\hspace{0.3cm}r,s\in\mathbb{Z}\right\},$$
where $\Pi_x$ and $\Pi_y$ denote the projections on the respective
coordinates and,
$$d(a, B):=\inf\{|b-a|:\hspace{0.1cm}b\in B\},$$
for given   $a\in\mathbb{R}$ and
$B\subseteq\mathbb{R}$.
Notice that
$d(a,\emptyset)=+\infty$, since, by convention, $\inf\emptyset=+\infty$.

\subsection{Normal lines}\label{ss:nl}
The extrapolation procedure in 1D can be reasonably performed after
the accuracy and stability analysis that have been performed in
Section \ref{seq} and  Appendix \ref{ap:stability}. We focus now on the
two-dimensional setting and boundaries with prescribed Dirichlet
conditions, e.g., reflective boundary conditions for the Euler
equations. In this situation, it seems reasonable that the
extrapolation at a certain ghost cell $c_*=(x_*, y_*)$ be based on the
prescribed value at the nearest boundary point.
It can be proven that a point  $p\in\partial\Omega$ satisfying
$$\|c_*-p\|_2=\min\{\|c_*-p'\|_2:\quad p'\in\partial\Omega\}$$
also satisfies that the line determined by  $c_*$ and $p$ is
normal to the curve $\partial\Omega$ at  $p$, if $\partial\Omega$ is
differentiable at $p$. Uniqueness of $p$
holds whenever $c_*$ is close enough to the boundary, so
 we will
henceforth  denote $N(c_*)=p$.

This argument suggests that a good strategy is to perform a (virtual)
rotation of the domain and obtain data on some
points $N_i\in\Omega$ on  the line that
passes through $c_*$ and $N(c_*)$ (normal line to $\partial \Omega$
passing by $c_*$)
(see Subsection \ref{sss:choice_nodes_nl} for the details on this choice)
and then use a one-dimensional extrapolation from the data on these
points on the segment   to approximate the value at $c_*$.

In case that the boundary conditions prescribe values for the normal
component of a vectorial unknown $\overrightarrow{v}$ related to the
coordinates frame (as is the case
for reflective
boundary conditions for the Euler equations), then one defines
$$\overrightarrow{n}=\frac{c_*-N(c_*)}{\Vert c_*-N(c_*)\Vert},\quad
\overrightarrow{t}=\overrightarrow{n}^{\perp},
$$
and obtains normal
and tangential components of $\overrightarrow{v}$ at each point $N_i$ of the mentioned
segment by:
\begin{equation*}
  v^t(N_i)=\overrightarrow{v}(N_i)\cdot \overrightarrow{t}, \quad
  v^n(N_i)=\overrightarrow{v}(N_i)\cdot \overrightarrow{n}.
\end{equation*}
The extrapolation procedure is applied to $v^t(N_i)$ to approximate
$v^t(c_*)$  and to $v^n(N_i)$  and $v^n(N(c_*))=0$ to approximate
$v^n(c_*)$. Once $v^t(c_*), v^{n}(c_*)$ are approximated, the
approximation to $\overrightarrow{v}(c_*)$ is set to
\begin{equation*}
  \overrightarrow{v}(c_*)=v^{t}(c_*)\overrightarrow{t}+v^{n}(c_*)\overrightarrow{n}.
\end{equation*}

\subsubsection{Choice of nodes on normal lines}
\label{sss:choice_nodes_nl}
 As mentioned in Section \ref{seq},
if we wish to formally preserve a certain precision in the resulting
scheme, it is
necessary to extrapolate the information from the interior of the
domain in an adequate manner. Therefore, if the basic numerical scheme
has order $r$ it is reasonable to use extrapolation of this order at
least. For the sake of clarity, we will not distinguish between
interpolation or extrapolation when these take place at the interior
of the domain.

At this point there are many possibilities. However, as expected, not
all of them yield the same quality in the results nor the same
computational efficiency. The following configuration supposes a
reasonable balance between both previous factors.

We proceed in a similar fashion as in
  \cite{Sjogreen}. Let $(x_*,y_*)\in\mathcal{GC}$ and consider the
corresponding point in
$\partial\Omega$ at minimal distance, $N(x_*,y_*)$. As already mentioned,
the vector determined by both points is orthogonal to
$\partial\Omega$ at
$N(x_*,y_*)$. Let us suppose that we wish to use an extrapolation of order
$r$ at the ghost cell  center
$(x_*,y_*)$.

At first place, one needs to obtain data from the information in
$\mathcal{D}$ at a set of points $\mathcal{N}(x_*,
y_*)=\{N_1,\dots,N_R\}$, with $R\geq r$, on the line
determined by the points  $(x_*,y_*)$ and $N(x_*,y_*)$. By a CFL
stability motivation (see  Appendix \ref{ap:stability} for further details), we
will do the
selection with a spacing  between them of at least the distance
between $(x_*, y_*)$ and $N(x_*, y_*)$. We will go further
  into this issue in section \ref{srn}.
 We will choose the nodes  depending on the slope of the normal line, so
 that the use of interior information is maximized. We will henceforth
 denote by $E(x)$ the integer rounding of $x$ towards $\infty$, i.e.,
 $\text{sign}(E(x))=\text{sign}(x)$ and $|E(x)|=\min (\mathbb N \cap
 [|x|, \infty))$.

We denote by $v=(v_1,v_2)$ the vector determined by $(x_*,y_*)$ and
$N(x_*,y_*)$, so that the normal line passing through $(x_*, y_*)$ is
given by the parametric equations:
\begin{align*}
  x&=x_*+s v_1\\
  y&=y_*+s v_2.
\end{align*}
Depending on the angle  $\theta$ of the vector
$v=(v_1,v_2)$, we consider two possibilities:
\begin{enumerate}
\item $|v_1|\geq |v_2|$.
\item $|v_1| < |v_2|$.
\end{enumerate}
In the first case we take points $N'_q=(x_*+qC_xh_x,
y_*+qC_xh_x\frac{v_2}{v_1})$, with $C_x\in\mathbb Z$ chosen with the
same sign as
$v_1$ and so that:
\begin{align*}
  \Vert N'_q-N'_{q+1}\Vert_2 &\geq  \Vert v\Vert_2\\
  \Vert N'_q-N'_{q+1}\Vert_2&=\frac{h_x|C_x|}{|v_1|}\Vert v\Vert_2 \geq
\Vert v\Vert_2 \Leftrightarrow |C_x|\geq \frac{|v_1|}{h_x}.
\end{align*}
Therefore, our choice is $C_x=E(\frac{v_1}{h_x})$.

  Now, if Dirichlet
boundary conditions at $N(x_*, y_*)$ are prescribed, we take
  the nodes
  \begin{equation}\label{eq:nodesinflow}
    \mathcal{N}(x_*,y_*)=\{N(x_*, y_*), N'_2, \dots, N'_{R+1}\},
  \end{equation}
  i.e., we drop $N'_1$ from the
  list. If no boundary 
  condition is specified on $N(x_*, y_*)$ then
  \begin{equation}\label{eq:nodesoutflow}
    \mathcal{N}(x_*,y_*)=\{N'_1, N'_2, \dots, N'_{R}\}.
  \end{equation}
In this fashion, the chosen nodes
$\mathcal{N}(x_*,y_*)=\{N_1,\dots,N_{R}\}$ satisfy   $\Vert N_q-N_{q+1}\Vert_2
\geq  \Vert v\Vert_2$, $q=1,\dots,R-1$.

Denote $N_q=(\widetilde x_q,\widetilde y_q)$. For each $q$ for which
$u(\widetilde x_q,\widetilde y_q)$ is not known, we need to obtain a
sufficiently accurate  approximation of this value from the information on the
interior nodes.
Since the second coordinate, $\widetilde y_q$, of $N_q$  does not need to coincide with
the center of a vertical     cell, we will use interpolation  from the
cells in  the line  $x=\widetilde x_q$ by using the following set of points:
\begin{align*}
  \mathcal{S}_q&=\{N_{q,1},\dots,N_{q,R}\}:=\argmin_{A\in\mathcal{A}}
    \sum_{(\widetilde x_q, y_s) \in  A}|y_s-\widetilde y_q|,
    \\
    \mathcal{A}&:=\{ A=\{(\widetilde x_q, y_j),\dots,(\widetilde x_q,
    y_{j+R-1})\} / A \subseteq \mathcal{D}
\}.
\end{align*}
That is, we select the vertical stencil of length $R$ with a  first
coordinate fixed to  $\widetilde x_q$ such that it be as centered as
possible with respect to the point $N_q$, see
    Figure \ref{fig:Cxy} (a) for a graphical example.

    In a dual fashion, in the second case we take points
$N_q=(x_*+qC_yh_y\frac{v_1}{v_2}, y_*+qC_yh_y)$, with
$C_y=E(\frac{v_2}{h_y})$
and
\begin{align*}
  \mathcal{S}_q&=\{N_{q,1},\dots,N_{q,R}\}:=\argmin_{A\in\mathcal{A}}
    \sum_{( x_s, \widetilde y_q) \in  A}|x_s-\widetilde x_q|,
    \\
    \mathcal{A}&:=\{ A=\{(x_j, \widetilde y_q),\dots,(x_{j+R-1}, \widetilde y_q)\} / A \subseteq \mathcal{D}
\}.
\end{align*}
See     Figure \ref{fig:Cxy} (b) for a graphical example.

    \begin{figure}[htb]
      \centering
      \begin{tabular}{cc}
      \includegraphics[width=0.4\textwidth]{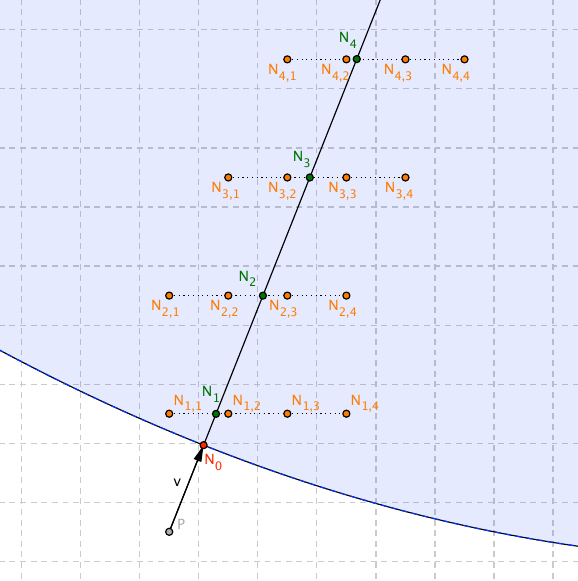} &
      \includegraphics[width=0.4\textwidth]{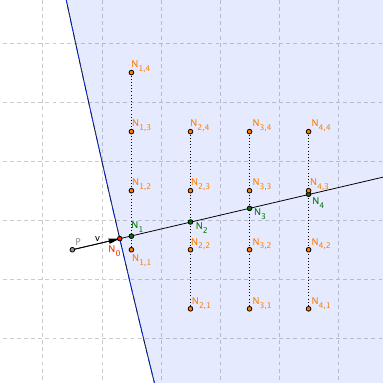}\\
      (a) & (b)
    \end{tabular}
    \caption{Examples of choice of stencil: (a) $C_y=2$, $N_{q,i}\in\mathcal{S}_q$; (b) $C_x=1$, $N_{q,i}\in\mathcal{S}_q$.}
      \label{fig:Cxy}
    \end{figure}

The above procedure for the selection of the interpolation nodes at
the normal lines and their corresponding sets $\mathcal{S}_q$ is
performed only once at the beginning of the simulation as long as the
boundary does not change. With an adequate use of this data structure,
one can reconstruct data
at order $r$ (in case of smoothness) at the points $N_1,\dots,N_R$ on
the normal line. Once these values are obtained, they are used  to
finally extrapolate  to the given ghost cell $(x_*, y_*)$.
The full
extrapolation procedure is thus done in two stages in general: in the first
 one data located at the normal lines is computed from
 the numerical solution by (horizontal or vertical) 1D
 interpolation; in the second one values for the ghost cells
 are obtained by 1D extrapolation along the normal line
 from the data in the normal line obtained in the first stage
 and eventually data coming from the boundary conditions.

To perform these  two one-dimensional data approximations,
it should be taken
into account that the selected stencils can include regions with
singularities. We will see in the next section how to proceed in this
case.

For our one-dimensional experiments, the corresponding nodes selection
procedure  is  as follows:
 Given a ghost cell $x_*$, we denote $v=N(x_*)-x_*$ and  take
points $N_q=x_*+qC_xh_x$, with $C_x=E(\frac{v}{h_x})$, so that
\begin{align*}
  | N_q-N_{q+1}| = |C_x|h_x &\geq  | v|.
\end{align*}

  Now, if Dirichlet
boundary conditions at $N(x_*)$ are prescribed,
we take   the nodes
  \begin{equation}\label{eq:nodesinflow1d}
    \mathcal{N}(x_*)=\{N(x_*), N'_2, \dots, N'_{R+1}\},
  \end{equation}
  i.e., we drop $N'_1$ from the
  list. If no boundary 
  condition is specified on $N(x_*)$ then
  \begin{equation}\label{eq:nodesoutflow1d}
    \mathcal{N}(x_*)=\{N'_1, N'_2, \dots, N'_{R}\}.
  \end{equation}
As in the general case, extrapolation based on $N_1,\dots,N_R$ is performed to
assign data to $x_*$.

\section{Extrapolation}\label{sep}
\subsection{Motivation}\label{sec:motivation}
Interpolation can produce large errors if there is a discontinuity in
the region determined by the interpolation nodes and the evaluation
point. When implementing extrapolation at ghost cells, as in Section
\ref{sml},   in order to avoid this considerable loss of precision or even a
complete failure of the simulation, it is necessary to handle this
situation carefully.

Since in our procedure the interpolator is evaluated at a point which is
not necessarily centered with respect to the interpolation nodes, we
cannot directly use  techniques
based on the partition of the stencil in substencils and/or the
weighting of these, such as it is done in ENO
  \cite{Harten1987} or WENO schemes, since
in this case
not all the substencils are useful, this depending on the localization
of the discontinuity and the evaluation point.

Consider for instance the function $u:=\chi_{[1/2,+\infty)}$ and
take the nodes $x_i:=i-1,$ $1\leq i\leq 5$, with nodal values $u_i:=u(x_i)$ and
suppose we want to extrapolate this information at $x^*=-1$. Our nodal
values are thus $u_1=0$ and $u_i=1$ for $2\leq i\leq 5$. It is well-known that the
ENO3 technique divides
the global stencil of five points into three substencils,
$S_r:=\{x_{r+1},x_{r+2},x_{r+3}\},$ $0\leq r\leq 2$, and chooses the one
with maximal smoothness in terms of its divided differences, in this case,
$S_1$ or $S_2$, both with all nodal values equal to 1 and thus all
derivatives are zero. However, the result of this extrapolation at
$x^*=1$ is 1, which corresponds to the other state of the discontinuity
from where $x^*$ is located. The same applies for WENO.

Therefore, the interpolation strategy should be made more
flexible, in order to choose certain nodes as valid
according to some criterion and reject the rest. The strategy expounded
in section  \ref{sml} lets us focus on a one-dimensional setting.

\subsection{Stencil selection by thresholding}
Lets us assume that we have information on a stencil of not necessarily
equispaced nodes, $x_1 < \dots < x_R$, with corresponding nodal values
$u_i=u(x_{i})$, and that we wish to interpolate at a
certain node $x^*$.

The key node on which we establish a proximity criterion on its
corresponding nodal value is the interior node which is the closest
to $x^*$, i.e.,
we choose the node $x_{i_0}$, $i_0\in\{1,\ldots,R\}$ such that:
$$i_0=\argmin_{1\leq i\leq R}|x_i-x^*|.$$
Now,  the goal is to  approximate the value that that node should
have, based on the information of the ``smoothest'' substencil and the
node $x_{i_0}$.

Let $M$, $1\leq M\leq R$, be the prescribed size for  substencils. We
therefore have  $R-M+1$ possible substencils:
$$S_r=\{x_{r+1},\dots,x_{r+M}\},\quad 0\leq r\leq R-M.$$
We denote by $p_r(x)$ the interpolator associated to the stencil
$S_r$, $0\leq r\leq R-M$. If sufficient smoothness at the whole
stencil holds, then one has:
\begin{equation}\label{ordr}
  \left.\begin{array}{rcl}
  u(x_i)-p_r(x_i)&=&\mathcal{O}(h_x^M),\quad i=1,\dots,R,
  \end{array}\right.,
\end{equation}
therefore
\begin{equation*}
  u(x_i)=u(x_{i_0})+(p_r(x_i)-p_r(x_{i_0}))+\mathcal{O}(h_x^M).
\end{equation*}
We select the  substencil that solves:
\begin{equation}\label{eq:choice_r0}
r_0:=\argmin_{0\leq r\leq
  R-M}\sum_{k=1}^{M-1}
\int_{x_{r+1}}^{x_{r+M}}(x_{r+M}-x_{r+1})^{2k-1}p_r^{(k)}(x)^2dx,
\end{equation}
and define
\begin{equation}\label{vi}
  v_i:=u_{i_0}+(p_{r_0}(x_i)-p_{r_0}(x_{i_0})).
\end{equation}
 From  (\ref{ordr}), we have that $v_i=u_i+\mathcal{O}(h_x^M)$
 if there's smoothness up to the $(M-1)$-th derivative.
On the other hand, assuming that $u$ is smooth on an open set that contains
$S_{r_0}$, if there is a discontinuity within
the whole stencil $\bigcup_{r=0}^{R-M}S_r$ and
$u_i$ is quite far from $u_{i_0}$, since by construction
$v_i=u_{i_0}+\mathcal{O}(h_x)$, then it can be expected that
 $v_i$ also be quite different from $u_i$.

In order for the smoothness assumption on $S_{r_0}$ to make sense in a
general setting, one needs $M\leq
E\left(\frac{R}{2}\right)$, because all substencils would
overlap in some common central nodes otherwise, leading to a situation where all substencils contain a discontinuity if
it is contained in the overlapping region.

Therefore, one can conclude that the proximity   of  $v_i$ with
respect to  $u_i$ indicates the stencil smoothness that would entail
including $u_i$ in a hypothetical stencil for the final extrapolation.

Finally, let $\delta \in (0, 1]$ be a \textit{threshold} and define
the set of indexes
\begin{equation}\label{eq:Idelta}
  I_{\delta}:=\left\{i\in\{1,\ldots,R\}:\quad
\delta\left(|u_i-u_{i_0}|+D(x_i)\right)
\leq|v_i-u_{i_0}|+D(x_i)\right\},
\end{equation}
where
$$D(x):=\sum_{j=1}^{M-1}\left|(x-x_{i_0})^jp_{r_0}^{(j)}(x_{i_0})\right|.$$
Notice that  $I_{\delta}\neq\emptyset$, since $i_0\in
I_{\delta}$.

 The term $D(x_i)$ is used in \eqref{eq:Idelta} to avoid an order loss at smoothness regions whenever
$\exists j_0$, $1\leq j_0\leq
M-1:$ $|u^{(j_0)}|\geq\mathcal{O}(\Delta x^M)$ near
  $x_{i_0}$. When the first derivative is
close to zero, despite both $|v_i-u_{i_0}|$ and $|u_i-u_{i_0}|$ are still
$M$-th order close, its quotient may be far from 1, specially when one of
the previous expressions is close to be zero or even exactly zero (for
instance, in zeros of even degree functions). The terms
$D(x_i)$, alleviates this discrepancy by
adding a
$\mathcal{O}(h_x^{k_0})\neq\mathcal{O}(h_x^{k_0+1})$ term, with
$k_0<M$, with $k_0$ the minimum index such that the $k_0$-th
derivative does not have a zero around $x_{i_0}$. The definitive
stencil is the largest stencil in $I_\delta$ containing $i_0$.

As last (optional) filter, if  $u^*$ is the value obtained from
Lagrange interpolation from the resulting stencil,
then the same threshold criterion can be applied to that value,
resulting in the definitive extrapolation value:
\begin{equation}\label{eq:udef}
u^*_{\textnormal{def}}=\left\{\begin{array}{ccc}
    u^* & \textnormal{if} &
    \delta'\left(|u^*-u_{i_0}|+D(x^*)\right)
    \leq|p_r(x^*)-p_r(x_{i_0})|+D(x^*)
    \\
    u_{i_0} & \textnormal{if} &
    \delta'\left(|u^*-u_{i_0}|+D(x^*)\right)
    >|p_r(x^*)-p_r(x_{i_0})|+D(x^*)
    \\
    \end{array}\right.\end{equation}
with $0\leq\delta'\leq1$.

This last criterion can be useful to detect wrong extrapolations (even
when data are apparently smooth and previous criteria are met).
Since it is an a posteriori criterion, we may generally use it with
threshold values that are more permissive, i.e., much smaller than one, than those for the nodes
rejection.
By construction, the closer the parameter   $\delta$ is to one the
lesser the tolerance to high gradients will be (with the consequent
risk of eliminating some nodes from smooth regions). On the other
hand, if $\delta$ is set too low, there may appear some oscillations
or artifacts near discontinuities.

The quality of the smoothness criterion is enhanced with a larger
substencil size (there is less risk of rejecting ``correct''
nodes). Furthermore a larger substencil can be also used to avoid a
loss of precision order when consecutive derivatives are null at some
point (precisely, until the $(M-2)$-th order derivative).
Nevertheless, this would force increasing the size $R$ of the
global stencil, i.e., obtain more data from the general problem in
order to avoid the previously mentioned problem.

In summary, the extrapolation of the nodal data
$\{(x_i,u_i)\}_{i=1}^R$ to the point $x^*$ consists of the following steps:
\begin{enumerate}
  \item Find $i_0$ such that $x_{i_0}$ is the closest node to $x^*$.

  \item Find the $M$-point stencil $\mathcal{S}_{r_0} = \{x_{r_0+j}\}_{j=1}^{M}$ with
    maximal smoothness. We use the smoothness indicators in \eqref{eq:choice_r0}
     for this purpose.

  \item For $i\in \{1,\dots, R\}$ compute candidate
  approximations $v_i$ of $u_i$ using \eqref{vi}.
  \item Fix a value $0<\delta\leq1$ and compute the set of nodes
$I_\delta$ according to \eqref{eq:Idelta}.

\item Extract the largest stencil in $I_\delta$ containing $i_0$.

\item Compute the extrapolated value $u^*$ at $x^*$ using the stencil in the previous step.

 \item Optionally, fix $0<\delta'\leq1$ and replace $u^*$ by $u^*_{\textnormal{def}}$ 
  computed from \eqref{eq:udef}.
\end{enumerate}

Let us apply the previous steps to the toy example in Section \ref{sec:motivation}.
We have $R=5, M=3$ in that example and we assume the values $\delta=\delta'=0.5$, although any
other choice of $\delta$ and $\delta'$ in the range $(0,1)$ would give the same result.
The stencil selection procedure is as follows:
\begin{enumerate}
\item The closest node to $x^*=-1$ is $x_1=0$, whose nodal value is
$u_1=0$.
\item There are two stencils where the information is constant, $S_1$
and $S_2$ and therefore any of them would be selected in this step
leading to the same result. Assume $S_1$ is chosen.

\item $v_i = u_1 = 0$ $1\leq i\leq 5$ because $p_1 = 1$ for all $i\in \{1, \dots, 5\}$ and thus $D(x)=0$.

\item The differences $|u_i-v_i|$ are all equal to 1 except for $x_1$ for which it is equal to 0.
Therefore $I_\delta = \{x_1\}$ and the result of the extrapolation is $u^*=u_1=0$.
\item If the a posteriori filter is applied the result is kept as $\delta'|u^*-u_1|=0\leq 0=|p_{1}(x^*) - p_1(x_1)|$.
 \end{enumerate}

 In the numerical experiments that we will include in section
\ref{srn}, specially in
problems with shocks, contacts and turbulence, there are occasions in
which we will have to select more nodes than the minimum that would be
needed to attain the order of the method.  The reason for this
decision is  to yield more search room to locate a substencil with the
maximal smoothness, for discontinuities propagate through several
cells and one should compensate for this additional margin.

The well-posedness of the initial-boundary value problems depends
heavily on the correct setting of boundary conditions. For linear
systems and some nonlinear systems, it is well-known that boundary
conditions on incoming characteristic variables (either globally or
locally defined) should be obtained from prescribed boundary data (on
primitive or conserved variables) and outgoing characteristic
variables,  in such a way that the number of incoming characteristics
is the number of boundary conditions
(see
\cite{GodlewskiRaviart96,GustafssonKreissOliger95,OligerSundstrom78}
and references therein).
This is the approach for setting
numerical boundary conditions in general systems that has been used in  \cite{Sjogreen,TanShu,TanWangShu}. In practice
 a formulation based on either conserved or primitive variables is preferred
 as long as the possible combinations of (conserved or primitive) variables to be determined by boundary conditions are known.
 For the Euler equations this equivalence is known and depends essentially on whether the flow
 is subsonic or supersonic and the kind of boundary conditions under consideration. We refer to
 \cite{Yee1981,PletcherEtAl2012} for further details.

 In the examples shown in section \ref{srn}
 we focus on the case of the Euler equations with reflecting boundary
 conditions on  solid walls, at which the normal velocity is
 set to zero, thus only one characteristic is
 incoming. Furthermore, the value corresponding to this incoming
 characteristic can be obtained from the normal velocity (which is set
 to zero) and  the other
 outgoing characteristics. This is consistent with our procedure for
 the 2D experiments, for which we use
 extrapolation on rotated primitive variables $\rho, v^{n}, v^{t},p$
 (see Section \ref{ss:nl} for the definition of $v^{n}, v^{t}$), using
 boundary data $v^{n}=0$ on the solid walls.

 On the other hand, when the flow is
 supersonic all characteristics are incoming at the inlet (inflow boundary)
 and outgoing at the outlet (outflow boundary)
and therefore we fix all primitive variables at
 the inflow boundaries and none at the outlet.
 Therefore, depending on the type of boundary condition, some variables may be fixed to values determined
 by the boundary conditions at the boundary points, which are included
 in the stencil used for extrapolation. A proper node spacing for
 stability in ensured as described in Appendix \ref{ap:stability}.

\section{Numerical experiments}\label{srn}

\subsection{One-dimensional experiments}
\label{ss:1dexperiments}

In this section we present some one-dimensional
  numerical experiments where both the accuracy of the extrapolation method 
  for smooth solutions and its behavior in  presence of discontinuities
   will be tested and analyzed.
      
Let us remark that for one-dimensional tests it is not necessary to
develop a procedure as in the two-dimensional case described in Section \ref{sml}, because one can
set up initially a proper spacing between the nodes and perform a
straight extrapolation at the ghost cells without having stability issues
due to the presence of small-cut cells. However,
to present accuracy and stability analysis in an easier setup,
we perform the one-dimensional extrapolation explained in
\eqref{eq:nodesinflow1d}, \eqref{eq:nodesoutflow1d}, which directly corresponds to the two-dimensional
extrapolation procedure that is proposed in this paper.  
This approach will illustrate that the accuracy order will still be
the expected one in  
the smooth case and that the extrapolation method shows good 
performance in the non-smooth case. 

\subsubsection{Linear advection,  $\mathcal{C}^{\infty}$ solution.}

We start with a simple one-dimensional test case
that will be used
to illustrate the performance of the proposed method
and also to analyze the importance and relative influence of some
elements of the algorithm along the four examples detailed below.
The problem statement for this test is the same as in \cite{TanShu}.
We consider the linear advection equation
$$u_t+u_x=0,\quad \Omega:=(-1,1),$$
with initial condition given by $u(x,0)=0.25+0.5\sin(\pi x)$ and
boundary condition
$u(-1, t)=0.25-0.5\sin(\pi(1+t)),$ $t\geq0$. We apply a numerical
outflow condition at $x=1$, where
Dirichlet boundary conditions cannot be imposed due to the direction
of propagation of the information.

It is immediately checked that the unique  (smooth) solution to this problem
is $$u(x,t)=0.25+0.5\sin(\pi(x-t)).$$

{\bf Example 1.} In order to numerically test the order of accuracy
we perform tests at resolutions  given by $n=20 \cdot 2^j$ points,
$j=1,\dots,5$. The cell centers are
$x_j:=-1+(j+\frac{1}{2})h_x$, with $h_x:=\frac{2}{n}$. We
recall that the set of all cell centers which are interior to $\Omega$
is
$$\mathcal{D}:=\left\{x_j:\hspace{0.1cm}j\in\{0,\ldots,n-1\}\right\}.$$
Since we use WENO5 reconstruction, we require 3 extra cells at each
side of the boundary, where extrapolation from the interior will take place.
\begin{itemize}
  \item $x=-1$: $x_j,$ $-3\leq j\leq -1$.
  \item $x=1$: $x_j,$ $n\leq j\leq n+2$.
\end{itemize}

We perform the one-dimensional extrapolation explained in
\eqref{eq:nodesinflow1d}, \eqref{eq:nodesoutflow1d}, which directly corresponds to the two-dimensional
extrapolation procedure that is proposed in this paper.

Given that the ODE solver is third order accurate, in order to attain
fifth order accuracy in the overall scheme, we need to select a time
step given by  $\Delta
t=\left(\frac{2}{n}\right)^{\frac{5}{3}}$, which corresponding Courant
numbers $\Delta t / h_x=(2/n)^{2/3}\leq 1/20^{2/3}$.

Since the left boundary conditions are time dependent,
we also have to take into account that a specific approximation is
needed in each of the 3 stages in each  RK3-TVD time step. In general,
if the inflow condition is given by some function $g(t)$ which is at
least twice continuously differentiable, we have to use the following
values at the boundary to preserve third order accuracy \cite{Carpenter}:
\begin{itemize}
  \item First stage: $g(t_k).$
  \item Second stage: $g(t_k)+\Delta tg'(t_k).$
  \item Third stage: $g(t_k)+\frac{1}{2}\Delta tg'(t_k)+\frac{1}{4}\Delta
    t^2g''(t_k).$
\end{itemize}
Taking into account all the previous considerations, we execute the
simulation until  $t=1$ for all the previously
specified resolutions and we study the errors in the 1 and $\infty$
norms, together with the order deduced from them. We consider
different modalities of boundary extrapolation: Constant extrapolation
using only the closest node value (Table \ref{O1}), five points
stencil Lagrange extrapolation
without discontinuity filters (Table \ref{lagrange}) and with
filters by thresholding for different choices of the
  thresholds (Tables \ref{detec99}--\ref{detec755}).
\begin{table}[htb]
  \centering
  \begin{tabular}{|c|c|c|c|c|}
    \hline
    $n$ & Error $\|\cdot\|_1$ & Order $\|\cdot\|_1$ & Error
    $\|\cdot\|_{\infty}$ & Order $\|\cdot\|_{\infty}$ \\
    \hline
    40 & 2.07E$-3$ & $-$ & 3.87E$-2$ & $-$  \\
    \hline
    80 & 5.32E$-4$ & 1.96 & 1.96E$-2$ & 0.98 \\
    \hline
    160 & 1.34E$-4$ & 1.99 & 9.81E$-3$ & 1.00 \\
    \hline
    320 & 3.38E$-5$ & 1.99 & 4.91E$-3$ & 1.00 \\
    \hline
    640 & 8.48E$-6$ & 1.99 & 2.45E$-3$ & 1.00 \\
    \hline
  \end{tabular}
  \caption{Constant extrapolation (first order).}
  \label{O1}
\end{table}

The Table \ref{O1} illustrates that a low order extrapolation affects the
order of the global scheme. We can see that in this case is downgraded
to second order in $\|\cdot\|_1$, while it's first order in
$\|\cdot\|_{\infty}$.
\begin{table}[htb]
  \centering
  \begin{tabular}{|c|c|c|c|c|}
    \hline
    $n$ & Error $\|\cdot\|_1$ & Order $\|\cdot\|_1$ & Error
    $\|\cdot\|_{\infty}$ & Order $\|\cdot\|_{\infty}$ \\
    \hline
    40 & 8.73E$-6$ & $-$ & 2.44E$-5$ & $-$  \\
    \hline
    80 & 2.70E$-7$ & 5.01 & 7.35E$-7$ & 5.05 \\
    \hline
    160 & 8.45E$-9$ & 5.00 & 2.31E$-8$ & 4.99 \\
    \hline
    320 & 2.64E$-10$ & 5.00 & 6.95E$-10$ & 5.06 \\
    \hline
    640 & 8.26E$-12$ & 5.00 & 2.13E$-11$ & 5.03 \\
    \hline
  \end{tabular}
  \caption{Lagrange extrapolation (without filter).}
  \label{lagrange}
\end{table}

From Table \ref{detec99} on, we add the last column with the
percentage of extrapolations for which no rejection, either in the 5
nodes or in the final result in the a posteriori criterion, has taken
place along the complete simulation.

\begin{table}[htb]
  \centering
  \begin{tabular}{|c|c|c|c|c|c|}
    \hline
    $n$ & Error $\|\cdot\|_1$ & Order $\|\cdot\|_1$ & Error
    $\|\cdot\|_{\infty}$ & Order $\|\cdot\|_{\infty}$ & $\%$
    Success \\
    \hline
    40 & 5.45E$-5$ & $-$ & 3.81E$-4$ & $-$ & 86.18 $\%$  \\
    \hline
    80 & 3.06E$-6$ & 4.15 & 3.65E$-5$ & 3.38 & 95.77 $\%$ \\
    \hline
    160 & 1.34E$-8$ & 7.83 & 2.10E$-7$ & 7.44 & 99.55 $\%$ \\
    \hline
    320 & 2.64E$-10$ & 5.67 & 6.95E$-10$ & 8.93 & 100.00 $\%$ \\
    \hline
    640 & 8.26E$-12$ & 5.00 & 2.13E$-11$ & 5.03 & 100.00 $\%$ \\
    \hline
  \end{tabular}
  \caption{Filter for detection of discontinuities, $\delta=\delta'=0.99$.}
\label{detec99}
\end{table}
\begin{table}[htb]
  \centering
  \begin{tabular}{|c|c|c|c|c|c|}
    \hline
    $n$ & Error $\|\cdot\|_1$ & Order $\|\cdot\|_1$ & Error
    $\|\cdot\|_{\infty}$ & Order $\|\cdot\|_{\infty}$ & $\%$
    Success \\
    \hline
    40 & 1.95E$-5$ & $-$ & 1.38E$-4$ & $-$ & 98.75 $\%$  \\
    \hline
    80 & 2.70E$-7$ & 6.17 & 7.35E$-7$ & 7.55 & 100.00 $\%$ \\
    \hline
    160 & 8.45E$-9$ & 5.00 & 2.31E$-8$ & 4.99 & 100.00 $\%$ \\
    \hline
    320 & 2.64E$-10$ & 5.00 & 6.95E$-10$ & 5.06 & 100.00 $\%$ \\
    \hline
    640 & 8.26E$-12$ & 5.00 & 2.13E$-11$ & 5.03 & 100.00 $\%$ \\
    \hline
  \end{tabular}
  \caption{Filter for detection of discontinuities, $\delta=0.9,$
    $\delta'=0.75$.}
\label{detec975}
\end{table}
\begin{table}[htb]
  \centering
  \begin{tabular}{|c|c|c|c|c|c|}
    \hline
    $n$ & Error $\|\cdot\|_1$ & Order $\|\cdot\|_1$ & Error
    $\|\cdot\|_{\infty}$ & Order $\|\cdot\|_{\infty}$ & $\%$
    Success \\
    \hline
    40 & 8.73E$-6$ & $-$ & 2.44E$-5$ & $-$ & 100.00 $\%$  \\
    \hline
    80 & 2.70E$-7$ & 5.01 & 7.35E$-7$ & 5.05 & 100.00 $\%$ \\
    \hline
    160 & 8.45E$-9$ & 5.00 & 2.31E$-8$ & 4.99 & 100.00 $\%$ \\
    \hline
    320 & 2.64E$-10$ & 5.00 & 6.95E$-10$ & 5.06 & 100.00 $\%$ \\
    \hline
    640 & 8.26E$-12$ & 5.00 & 2.13E$-11$ & 5.03 & 100.00 $\%$ \\
    \hline
  \end{tabular}
  \caption{Filter for detection of discontinuities, $\delta=0.75,$
    $\delta'=0.5$.}
\label{detec755}
\end{table}

From the results in those tables one can conclude that the detection behavior
improves with increasing resolution. The technical reason for this is
that the quotient
between the quantities appearing in \eqref{eq:Idelta} satisfies
\begin{equation*}
  \lim_{h_x\to 0}\frac{|u_i-u_{i_0}|+D(x_i)}{|v_i-u_{i_0}|+D(x_i)}=1.
\end{equation*}
Even at low resolutions, we observe that it is sufficient
to use a relatively restrictive threshold for not rejecting any point
in the extrapolations procedure at each time step.

{\bf Example 2.} We now perform a test omitting the $D(x_i)$ terms, which,
as stated in the previous section, help avoiding erratic node eliminations when
the differences are very close to be 0. The results can be seen at
Table \ref{erratic}, illustrating the importance of such terms.

\begin{table}[htb]
  \centering
  \begin{tabular}{|c|c|c|c|c|c|}
    \hline
    $n$ & Error $\|\cdot\|_1$ & Order $\|\cdot\|_1$ & Error
    $\|\cdot\|_{\infty}$ & Order $\|\cdot\|_{\infty}$ & $\%$
    Success \\
    \hline
    40 & 2.60E$-5$ & $-$ & 1.72E$-4$ & $-$ & 99.73 $\%$  \\
    \hline
    80 & 3.25E$-7$ & 6.32 & 1.60E$-6$ & 6.75 & 99.92 $\%$ \\
    \hline
    160 & 8.45E$-9$ & 5.27 & 2.31E$-8$ & 6.11 & 99.97 $\%$ \\
    \hline
    320 & 2.64E$-10$ & 5.00 & 6.95E$-10$ & 5.06 & 99.99 $\%$ \\
    \hline
    640 & 8.26E$-12$ & 5.00 & 2.13E$-11$ & 5.03 & 99.99 $\%$ \\
    \hline
  \end{tabular}
  \caption{Filter without $D(x_i)$ terms, $\delta=0.2,$ $\delta'=0.1$.}
\label{erratic}
\end{table}

We see that, indeed, without the $D(x_i)$ terms there are always some
nodes removed even using very low threshold values.

{\bf Example 3.} In order to illustrate the behaviour of our
  method in presence of small-cut cells, we now perform a test
  changing the location of the nodes by
  $x_j=-1+\left(j+\frac{1}{8}\right)h_x$ and following the procedure
  expounded in \eqref{eq:nodesinflow1d} and
  \eqref{eq:nodesoutflow1d}. For instance, to extrapolate 
  data to $x_*:=x_{-1}=-1-\frac{7}{8}h_x$, one first computes
  $v=N(x_*)-x_*=-1-x_{-1}=\frac{7}{8}h_x$, $C_x=E(v/h_x)=1$ and
  considers points $N'_q=x_{*}+qC_xh_x=x_{-1}+qh_x=x_{q-1}$.  Since
  there is a boundary condition at $N(x_*)=-1$, 
  $N'_1=x_0$ is not considered for extrapolation and   the selected five
  nodes are $\{-1, N'_2,\dots, N'_5\}=\{-1,x_1,\dots,x_4\}$

 The results obtained for $\delta=0.75,$ $\delta'=0.35$ are shown in Table \ref{nremoval}. 
 No node rejection occurred in this experiment.

  \begin{table}[htb]
  \centering
  \begin{tabular}{|c|c|c|c|c|}
    \hline
    $n$ & Error $\|\cdot\|_1$ & Order $\|\cdot\|_1$ & Error
    $\|\cdot\|_{\infty}$ & Order $\|\cdot\|_{\infty}$ \\
    \hline
    40 & 9.81E$-6$ & $-$ & 2.39E$-5$ & $-$  \\
    \hline
    80 & 3.06E$-7$ & 5.00 & 7.56E$-7$ & 4.98 \\
    \hline
    160 & 9.52E$-9$ & 5.00 & 2.28E$-8$ & 5.05 \\
    \hline
    320 & 2.97E$-10$ & 5.00 & 7.03E$-10$ & 5.02 \\
    \hline
    640 & 9.23E$-12$ & 5.01 & 2.12E$-11$ & 5.06 \\
    \hline
  \end{tabular}
  \caption{Lagrange extrapolation (node removal).}
  \label{nremoval}
\end{table}

Note that  in our numerical scheme and for accuracy reasons
  we have used $\Delta t=(h_x)^{\frac{5}{3}}$ and,
  therefore, no stability issue should appear  anyway for big enough $n$.
Forgetting about matching the spatial accuracy order with
  the time accuracy order, we
  set $n=80$, $\Delta t=0.9h_x$, that is, a CFL value of 0.9, and see that
our scheme is indeed stable and obtains good results as can be seen in
Figure \ref{fig:sremoval}.

\begin{figure}
  \centering
  \includegraphics[scale=0.4]{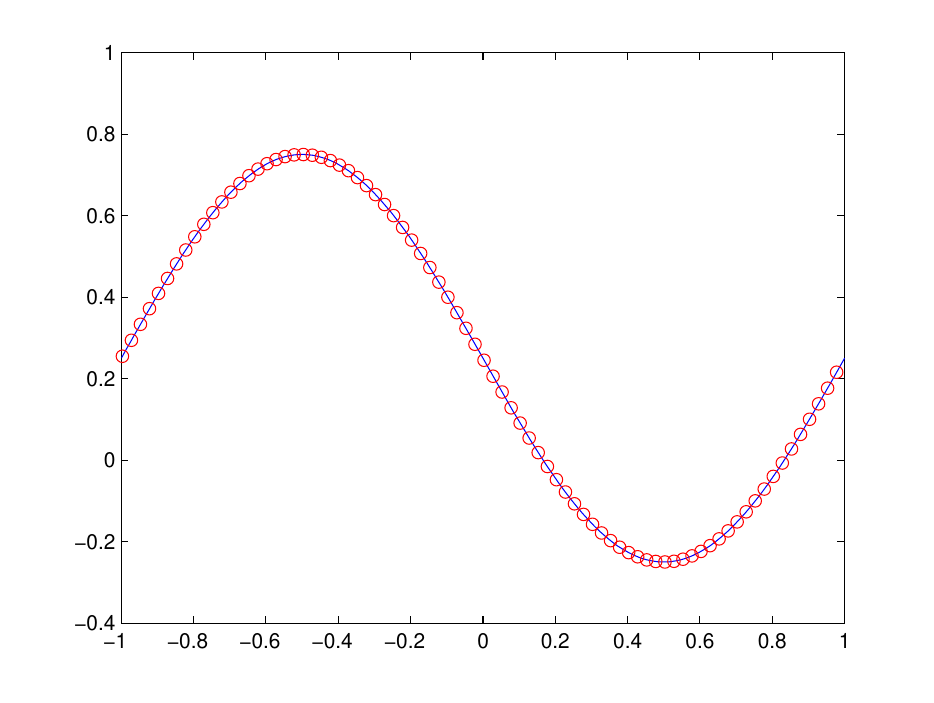}
  \caption{Stability.}
  \label{fig:sremoval}
\end{figure}

{\bf Example 4.} To complete the previous examples we now  analyze what happens
  if we attempt to extrapolate directly information
  at ghost cells  without the removal of nodes
  too close to the boundary, i.e., for $x_*=x_{-1}$ the stencil would
  be $\{-1,x_0,\dots,x_3\}$.
  For this experiment, we use
  the  grid from Example 3, a
  Courant number of $0.9$, i.e., $\Delta t=0.9 h_x$,
  and a five nodes extrapolation at both sides of the boundary as done in
  the previous experiments.
  The crucial difference with respect to Example 3 is
  that now we do not remove $N'_1$, thus
  resulting in a stability problem clearly visible
   already at the early stages of the simulation shown in
   Figure \ref{fig:oscillation_stabilized} (a),
   which ultimately lead to failure by numeric overflow.

  In order to illustrate that it is actually a CFL issue, we
    now repeat the simulation with a Courant number set again to $0.9$
    but based on the distance of the closest node of the inflow
  boundary to this last one (based on a spacing of
  $\frac{h_x}{8}$), i.e., $\Delta t=0.9 \frac{h_x}{8}$. In Figure \ref{fig:oscillation_stabilized} (b) it can be seen
  that now the scheme is stable.
  We conclude that the intermediate step consisting in extrapolating
  the information on nodes with adequate spacing is necessary in order to avoid
  unnecessarily severe time step restrictions.

  \begin{figure}[htb]
      \centering
      \begin{tabular}{cc}
      \includegraphics[width=0.4\textwidth]{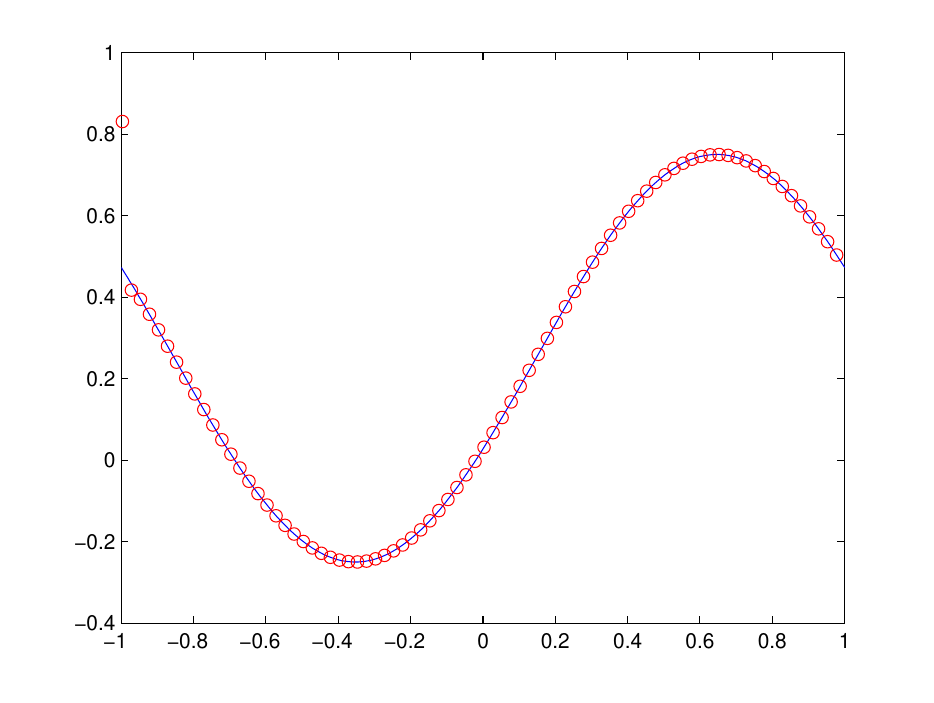} &
      \includegraphics[width=0.4\textwidth]{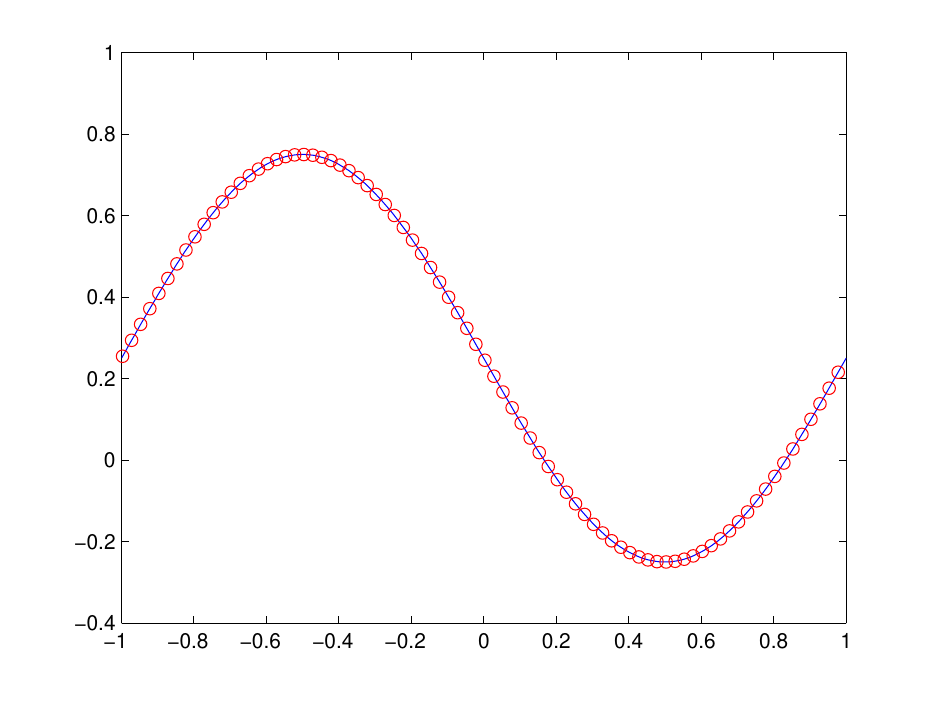}\\
      (a) & (b)
    \end{tabular}
    \caption{(a)$\Delta t=0.9h_x$, $t=0.147$. Oscillations appear;
      (b) $\Delta t=0.9\frac{h_x}{8}$, $t=1$. No oscillations}
\label{fig:oscillation_stabilized}
  \end{figure}

\subsubsection{Linear advection, discontinuous solution.}
We illustrate with this experiment the behavior of the schemes when
discontinuities are present and the entailed improvement with respect
to using Lagrange extrapolation with no filters. We consider the same
data as in the previous problem, but now the boundary condition is:
$$
u(-1,t)=g(t)=\left\{\begin{array}{ccc}
    0.25 & \textnormal{if} & t\leq1 \\
    -1 & \textnormal{if} & t>1 \\
    \end{array}\right.$$
  With this definition, the unique (weak) solution to this problem has
  a moving discontinuity and is given by:
$$u(x,t)=\left\{\begin{array}{ccc}
    -1 & \textnormal{if} & x<t-2 \\
    0.25 & \textnormal{if} & t-2\leq x\leq t-1 \\
    0.25+0.5\sin(\pi(x-t)) & \textnormal{if} & x\geq t-1 \\
    \end{array}\right.$$

In Figure \ref{comp} we check the graphical results that correspond to the simulation until
$t=1.5$, first using Lagrange extrapolation with no filters and
afterwards with a filter with  $\delta=0.75$ and $\delta'=0.5$, the
same values that have achieved no node rejections in the first
test. As it can be seen in Figure \ref{comp}, Lagrange extrapolation without
  filters leads to spurious oscillations around the left side of the
  discontinuity, while thresholding removes them.

\begin{figure}[htb]
\centering
\begin{tabular}{cc}
\includegraphics[width=0.38\textwidth]{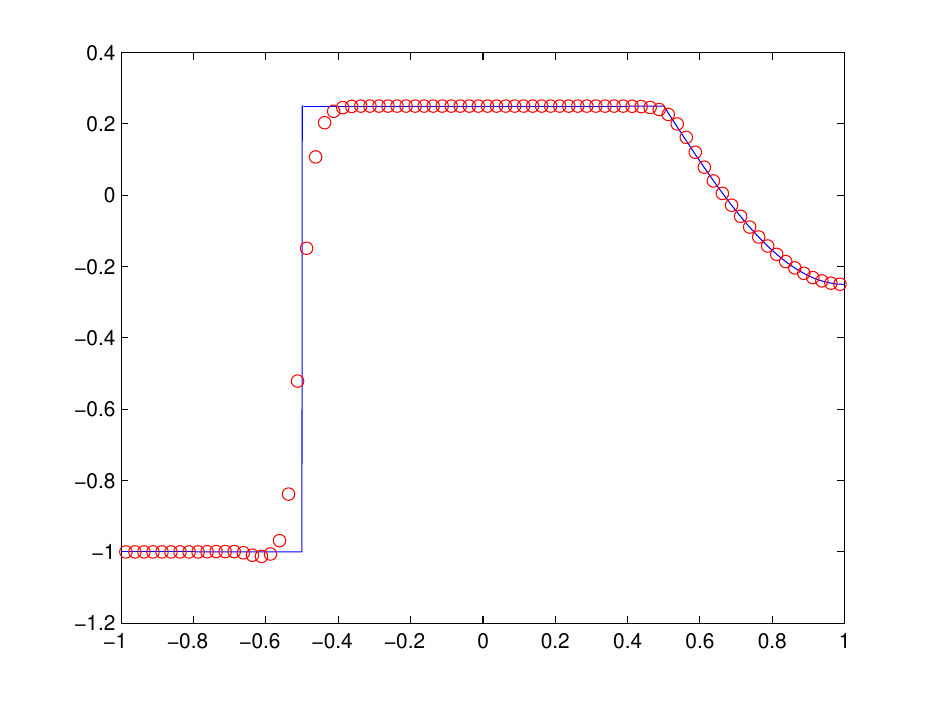}
&
\includegraphics[width=0.38\textwidth]{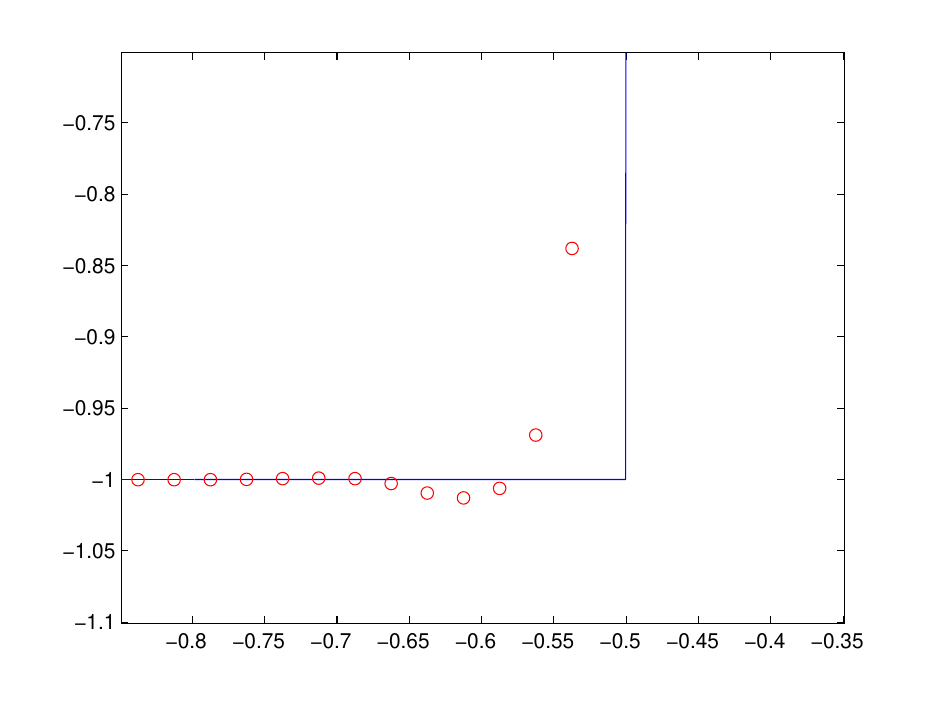}
\\
(a) Lagrange extrapolation. & (b) Lagrange extrapolation (zoom).\\
\includegraphics[width=0.38\textwidth]{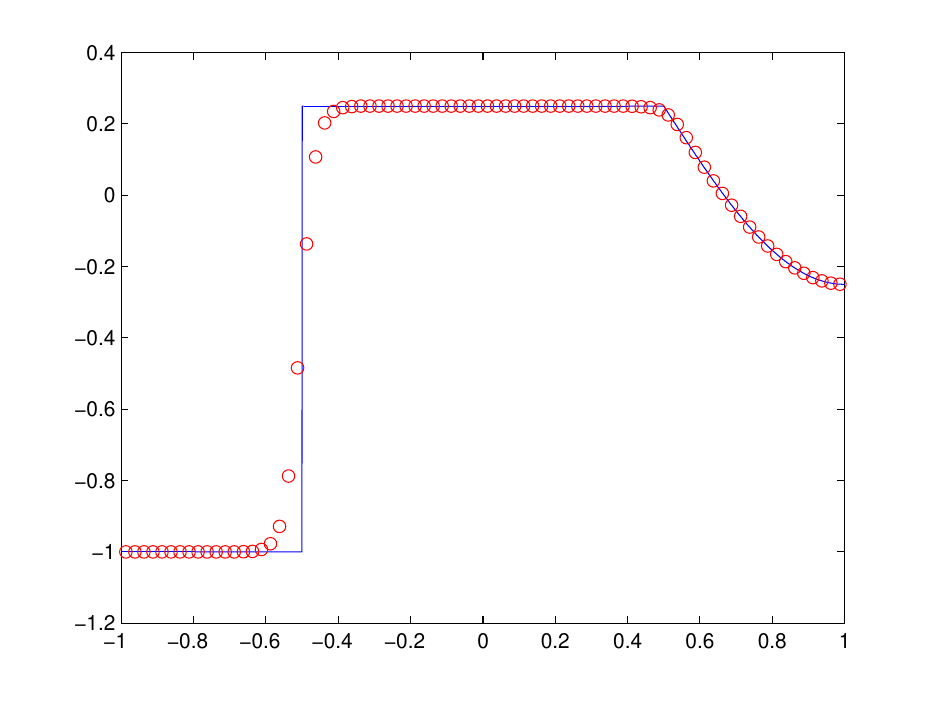}
&
\includegraphics[width=0.38\textwidth]{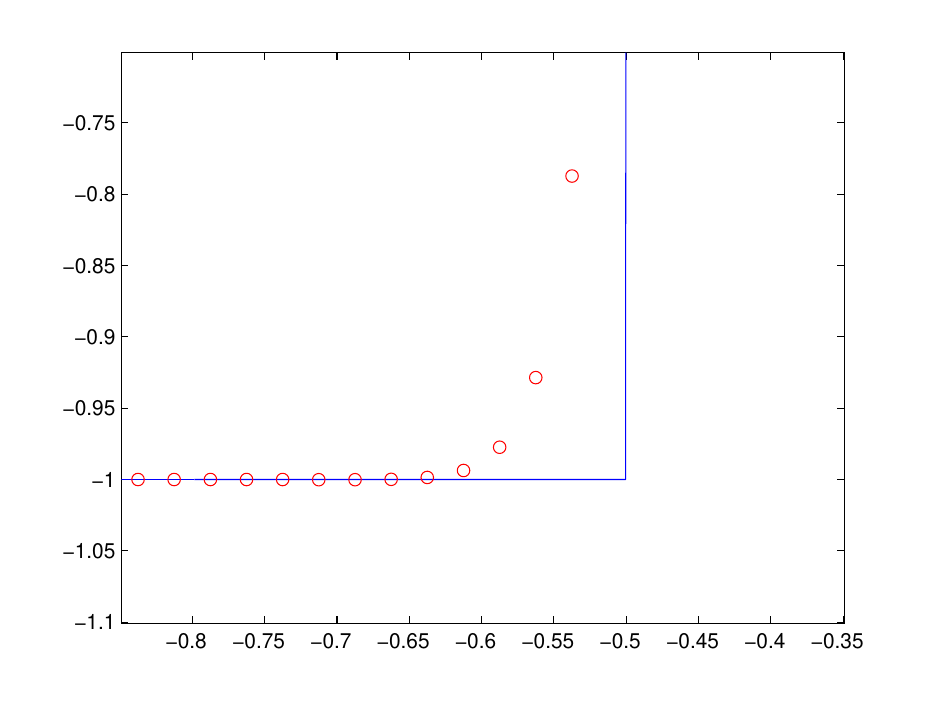}\\
(c) Extrapolation with thresholds. & (d) Extrapolation with thresholds
(zoom).
\end{tabular}
\caption{Comparison of different extrapolations for the linear
  advection test with discontinuous solution.}
\label{comp}
\end{figure}

\subsubsection{Burgers equation.}
Let's now perform some tests using Burgers equation
$$u_t+\left(\frac{u^2}{2}\right)_x=0,\quad\Omega=(-1,1),$$
with initial condition $u(x,0)=0.25+0.5\sin(\pi x)$ with outflow
condition at the right boundary and a left inflow
boundary conditions given by $u(-1,t)=g(t)$, where $g(t)=w(-1,t)$,
with $w$ the exact solution of the problem using periodic boundary
conditions.

For $t=0.3$ the solution is smooth and we get the following error
table for $n=40\cdot2^k,$ $0\leq k\leq 5$, and the same spacing
as the first test, using threshold values of $\delta=0.75$ and
$\delta'=0.5$, where no node rejection occurs at any resolution.

\begin{table}[htb]
  \centering
  \begin{tabular}{|c|c|c|c|c|}
    \hline
    $n$ & Error $\|\cdot\|_1$ & Order $\|\cdot\|_1$ & Error
    $\|\cdot\|_{\infty}$ & Order $\|\cdot\|_{\infty}$ \\
    \hline
    40 & 3.66E$-5$ & $-$ & 7.45E$-4$ & $-$  \\
    \hline
    80 & 6.96E$-7$ & 5.72 & 1.73E$-5$ & 5.43 \\
    \hline
    160 & 1.33E$-8$ & 5.70 & 3.58E$-7$ & 5.59 \\
    \hline
    320 & 3.34E$-10$ & 5.32 & 1.15E$-8$ & 4.96 \\
    \hline
    640 & 1.02E$-11$ & 5.04 & 3.43E$-10$ & 5.06 \\
    \hline
    1280 & 3.19E$-13$ & 4.99 & 1.03E$-11$ & 5.06 \\
    \hline
  \end{tabular}
  \caption{Error table for Burgers equation, $t=0.3$.}
  \label{burgers}
\end{table}

At $t=1.1$, a shock is fully developed in the interior of the
computational domain and enters the inflow boundary at $t=8$. At
$t=12$ it is located at $x=0$. We can see in Figure
\ref{burgersshock} that in this case
the discontinuities are well captured by our scheme as well.

\begin{figure}[htb]
\centering
\subfloat[$t=1.1$.]{\label{fb:1}\includegraphics[width=0.4\textwidth]{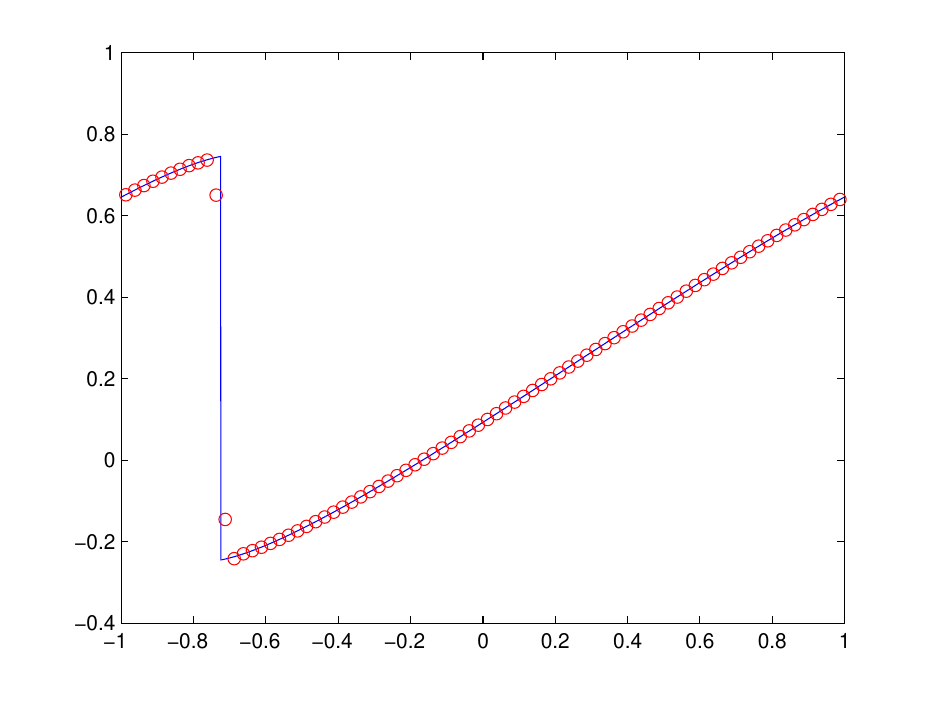}}
\subfloat[$t=12$.]{\label{fb:2}\includegraphics[width=0.4\textwidth]{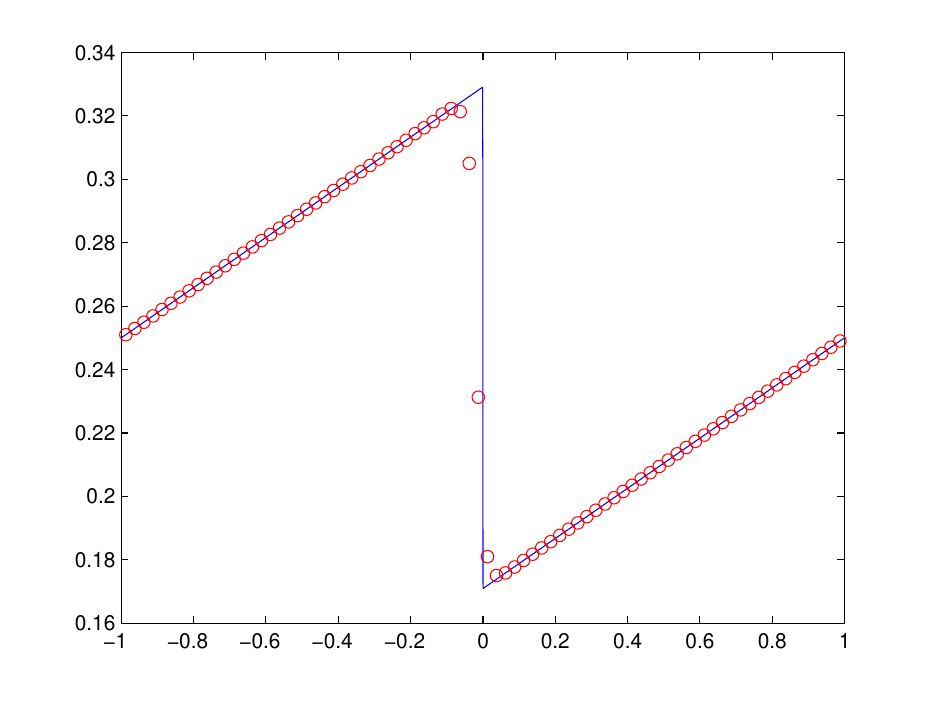}}
\caption{Shock in Burgers equation, $n=80$, $\delta=0.75$, $\delta'=0.5$.}
\label{burgersshock}
\end{figure}

\subsubsection{Euler equations.}

We end the one-dimensional experiments with an experiment using the Euler
equations

\begin{equation}\label{eq:eulereq1}
\begin{aligned}
&u_t+f(u)_x=0,\quad u=u(x,t),\quad\Omega=(0,1),\\
&u=\left[\begin{array}{c}
    \rho \\
    \rho v \\
    E \\
  \end{array}\right],\hspace{0.3cm}f(u)=\left[\begin{array}{c}
    \rho v \\
    p+\rho v^2 \\
    v(E+p) \\
  \end{array}\right],
\end{aligned}
\end{equation}
where $\rho$ is the density, $v$ is the
velocity and  $E$ is the specific energy of the system. The variable
$p$ stands for the pressure and is given by the equation of state:
$$p=\left(\gamma-1\right)\left(E-\frac{1}{2}\rho v^2\right),$$
where $\gamma$ is the adiabatic constant, that will be taken as
  $1.4$.

We simulate the interaction of two blast waves \cite{Colella} by using the
following initial data
$$u(x,0)=\left\{\begin{array}{ll}
    u_L & 0<x<0.1,\\
    u_M & 0.1<x<0.9,\\
    u_R & 0.9<x<1,
    \end{array}\right.$$
where $\rho_L=\rho_M=\rho_R=1$, $v_L=v_M=v_R=0$,
$p_L=10^3,p_M=10^{-2},p_R=10^2$. We set reflecting boundary conditions
at $x=0$ and $x=1$, simulating a solid wall at both sides. This
problem involves multiple reflections of shocks and rarefactions off
the walls and many interactions of waves inside the domain. We will
use the same extrapolation nodes setup as in the previous tests as well
as the same threshold values.

Figure \ref{eulershock} shows the density profile at $t=0.038$ at
two different resolutions, being the reference solution computed at
a resolution of $\Delta x=1/16000$. The figure clearly shows that the results are
satisfactory.

\begin{figure}[htb]
\centering
\subfloat[$\Delta x=1/800$.]{\label{fe:1}\includegraphics[width=0.4\textwidth]{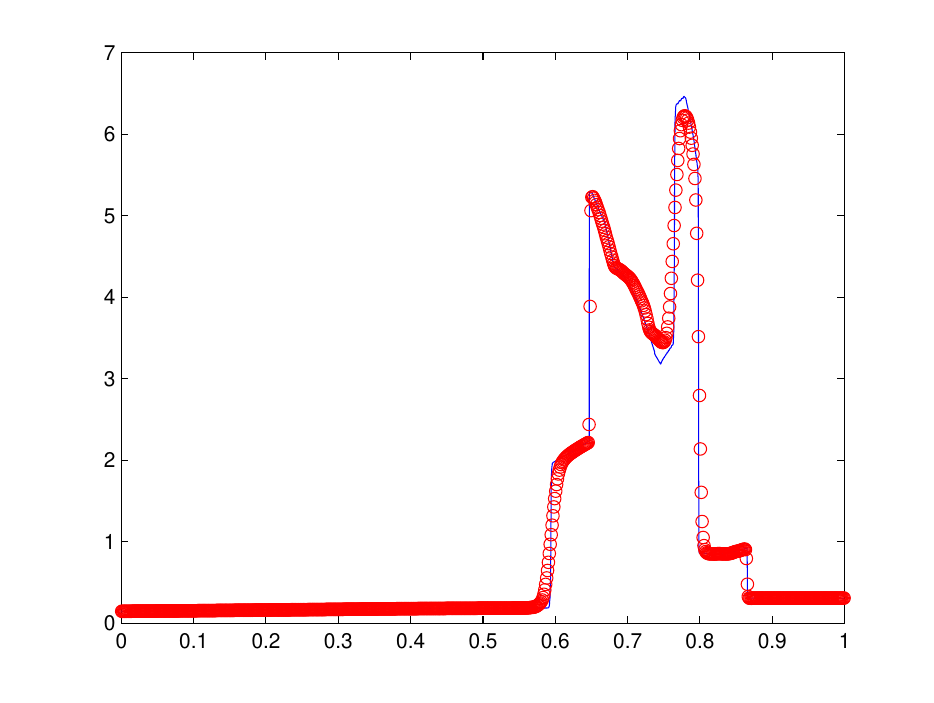}}
\subfloat[$\Delta x=1/1600$.]{\label{fe:2}\includegraphics[width=0.4\textwidth]{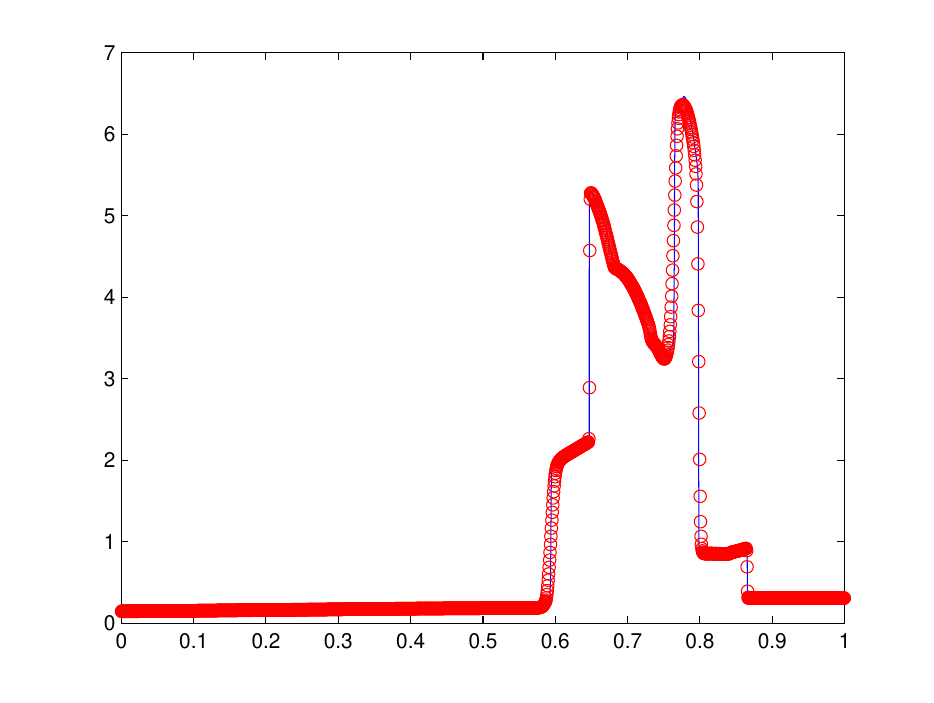}}
\caption{Density profile, $t=0.038$, $\delta=0.75$, $\delta'=0.5$.}
\label{eulershock}
\end{figure}

\subsection{Two-dimensional experiments}
The equations that will be considered in this section are the
two-dimensional Euler equations for inviscid gas dynamics
\begin{equation}\label{eq:eulereq}
\begin{aligned}
&u_t+f(u)_x+g(u)_y=0,\quad u=u(x,y,t),\\
&u=\left[\begin{array}{c}
    \rho \\
    \rho v^x \\
    \rho v^y \\
    E \\
  \end{array}\right],\hspace{0.3cm}f(u)=\left[\begin{array}{c}
    \rho v^x \\
    p+\rho (v^x)^2 \\
    \rho v^xv^y \\
    v^x(E+p) \\
  \end{array}\right],\hspace{0.3cm}g(u)=\left[\begin{array}{c}
    \rho v^y \\
    \rho v^xv^y \\
    p+\rho (v^y)^2 \\
    v^y(E+p) \\
  \end{array}\right].
\end{aligned}
\end{equation}
In these equations,  $\rho$ is the density, $(v^x, v^y)$  is the
velocity and  $E$ is the specific energy of the system. The variable
$p$  stands for the pressure and is given by the equation of state:
  $$p=(\gamma-1)\left(E-\frac{1}{2}\rho((v^x)^2+(v^y)^2)\right),$$
  where $\gamma$ is the adiabatic constant, that will be taken as
  $1.4$ in all the experiments.

\subsubsection{Double Mach Reflection}
This experiment uses the Euler equations to model a vertical right-going Mach
10 shock colliding with an equilateral triangle. By symmetry, this is
equivalent to a collision with a ramp with a slope of 30 degrees with
respect to the horizontal line, which is how we will model the
simulation to half the computational cost.

The data for this problem are the following:

$$\Omega=\left\{(x,y)\in(0,4)\times(0,4):
\hspace{0.1cm}y>\frac{\sqrt{3}}{3}\left(x-\frac{1}{4}\right)\right\}.$$
 The domain with the corresponding boundary conditions is sketched in
 Figure \ref{fig:sketch}

\begin{figure}[htb]
  \centering
  \includegraphics[width=0.4\textwidth]{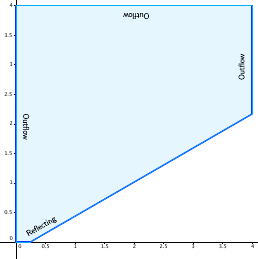}
  \caption{Domain for the double Mach reflection test.}
  \label{fig:sketch}
\end{figure}
The initial conditions are the following:
$$\begin{array}{rclcc}
  u=(\rho,v^x,v^y,E)&=&(8.0,8.25,0,563.5)&\textnormal{if}&x\leq\frac{1}{4}\\
  u=(\rho,v^x,v^y,E)&=&(1.4,0,0,2.5)&\textnormal{if}&x>\frac{1}{4}\\
\end{array}
$$

The most commonly used strategy for this simulation (see \cite{Colella})
 is to rotate the reference frame by $-30$ degrees, so that
 the simulation is cast into a rectangular domain, with a shock that
 is inclined 60
degrees with respect to the horizontal. In our case, we perform the
simulation with the original problem to see that the improvement
achieved by increasing the order of the extrapolations at the boundary
leads to results that are comparable to those obtained with the
rotated version.

Following the notation of the previous section, we have selected the
values $R=10$, $M=3$ for the boundary (substencils are of the same size as
those in WENO5). The reason for selecting  $R=10$ is not achieving an
order higher than the one of the method, but having a wider stencil with more
room for a safe selection of a substencil in smoothness regions.

We perform the simulation until  $t=0.2$. The experiment
consists in different simulations with different threshold values,
considering also a version with a unique point in the stencil (order
1). In Figure \ref{originalDMR} we present the result for the density
$\rho$ at a resolution of $h_x=h_y=\frac{\sqrt{3}}{2}\frac{1}{640}$,
which is equivalent to a resolution
$\hat{h}_x=\hat{h}_y=\frac{1}{640}$ in the rotated experiment. A comparison of the results for the original and the rotated experiment for different extrapolation options is shown in Figure \ref{ampl}.

\begin{figure}[htb]
  \centering
    \subfloat{\includegraphics[width=0.6\textwidth]{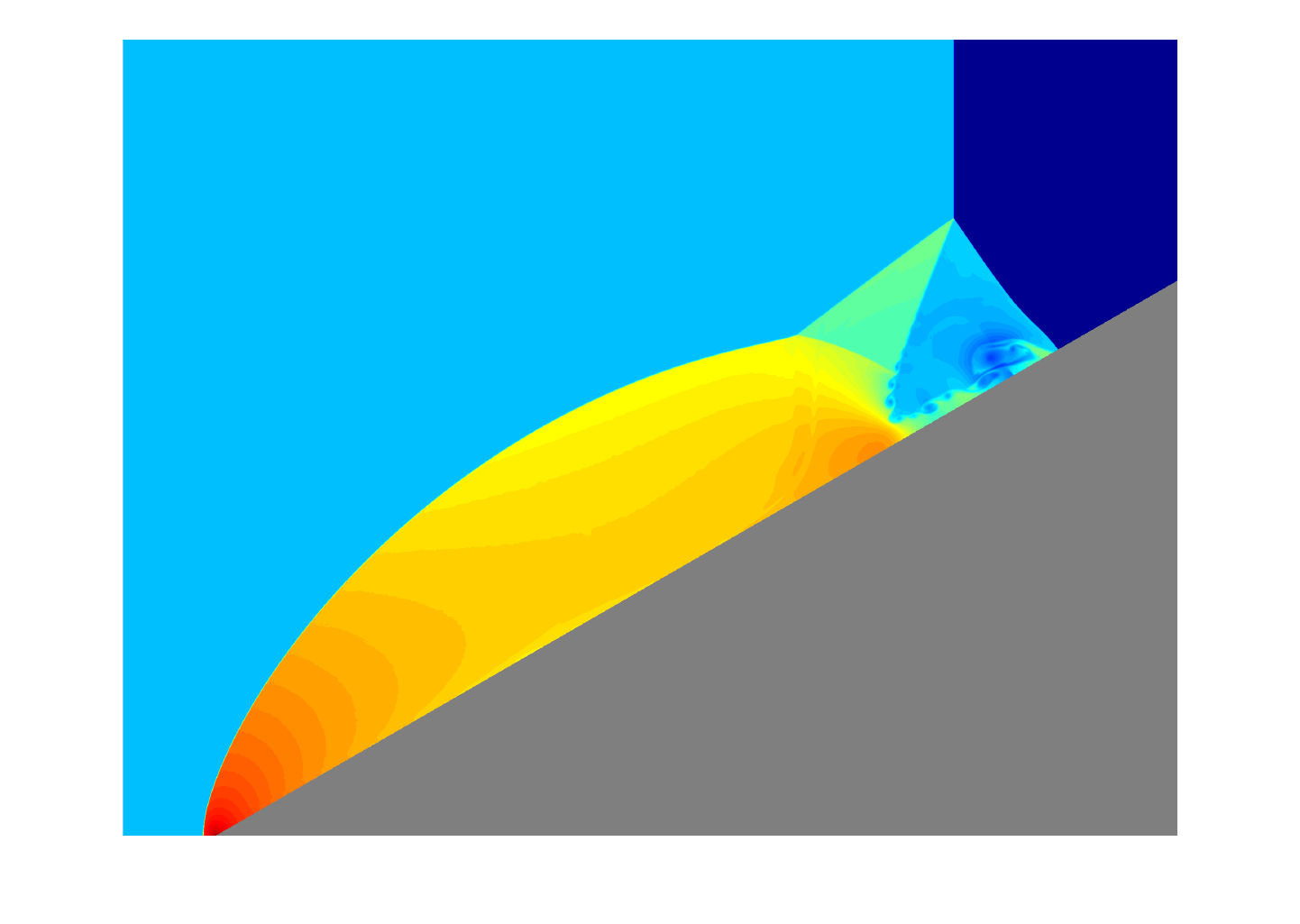}}
    \subfloat{\includegraphics[width=0.08\textwidth]{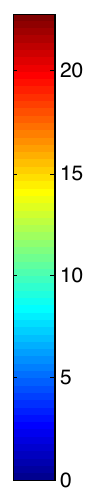}}
    \caption{Original problem.}
    \label{originalDMR}
\end{figure}
\begin{figure}[htb]
\centering
\subfloat[Rotated domain.]{\label{figg:1}\includegraphics[width=0.45\textwidth]{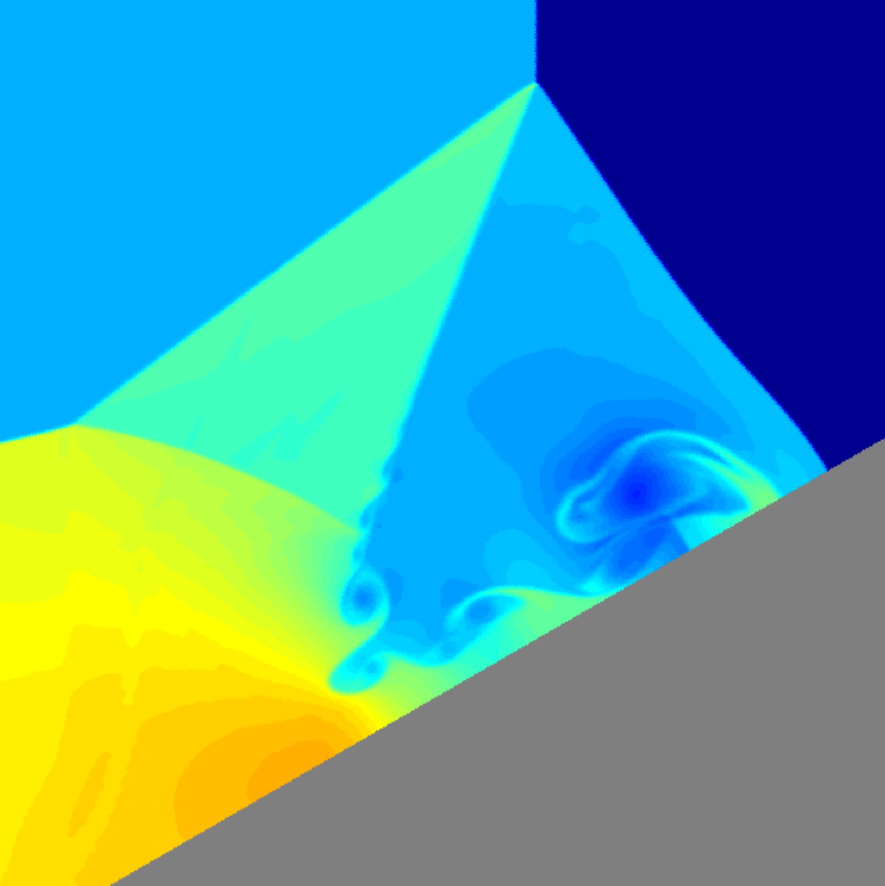}}
\subfloat[1st o. extrapolations.]{\label{figg:2}\includegraphics[width=0.45\textwidth]{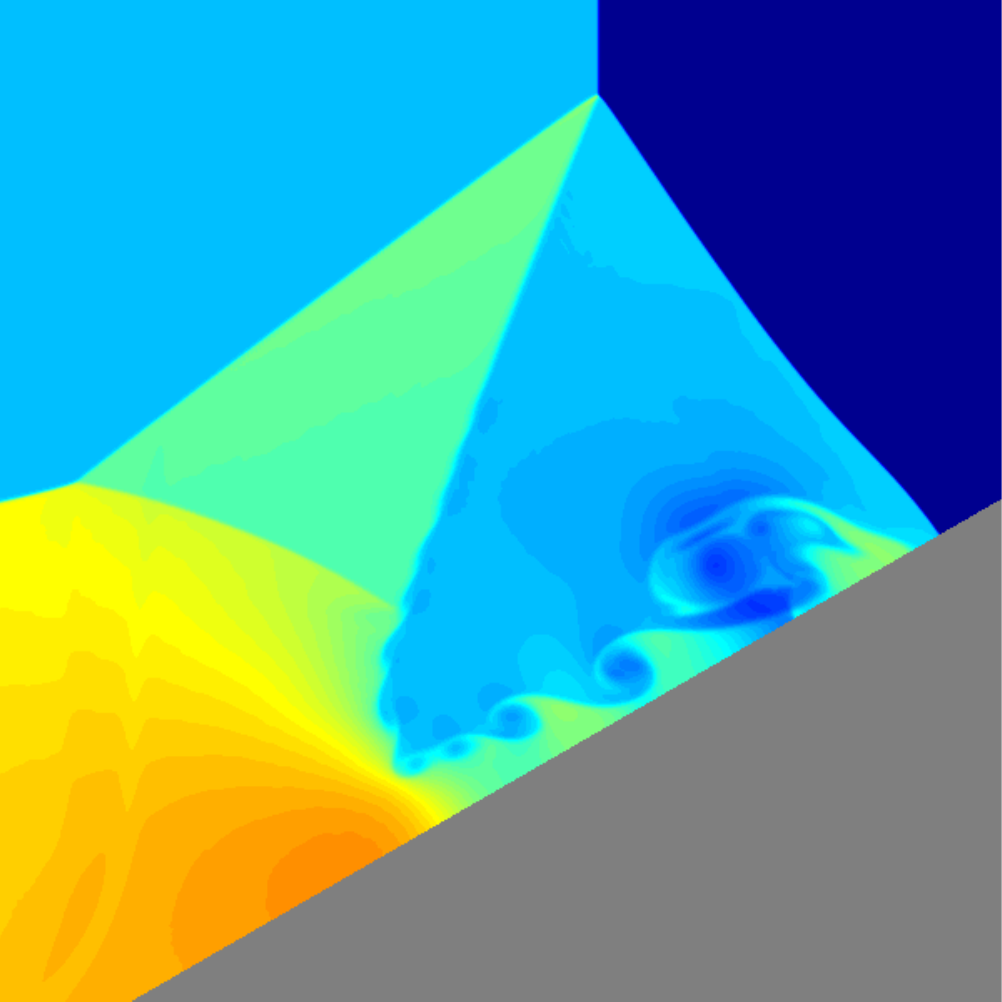}}\\
\subfloat[5th o. extrapolations. $\delta=\delta'=0.9.$]{\label{figg:3}\includegraphics[width=0.45\textwidth]{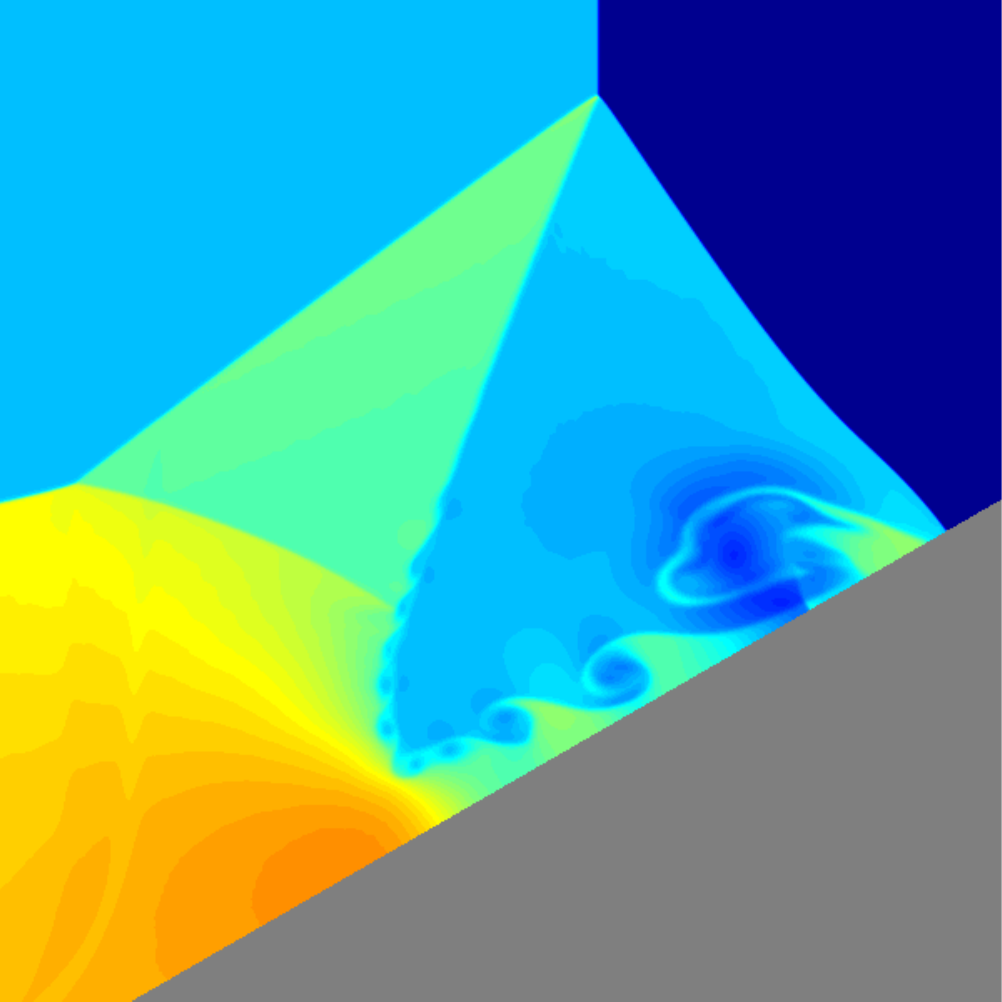}}
\subfloat[5th o. extrapolations. $\delta=\delta'=0.5.$]{\label{figg:4}\includegraphics[width=0.45\textwidth]{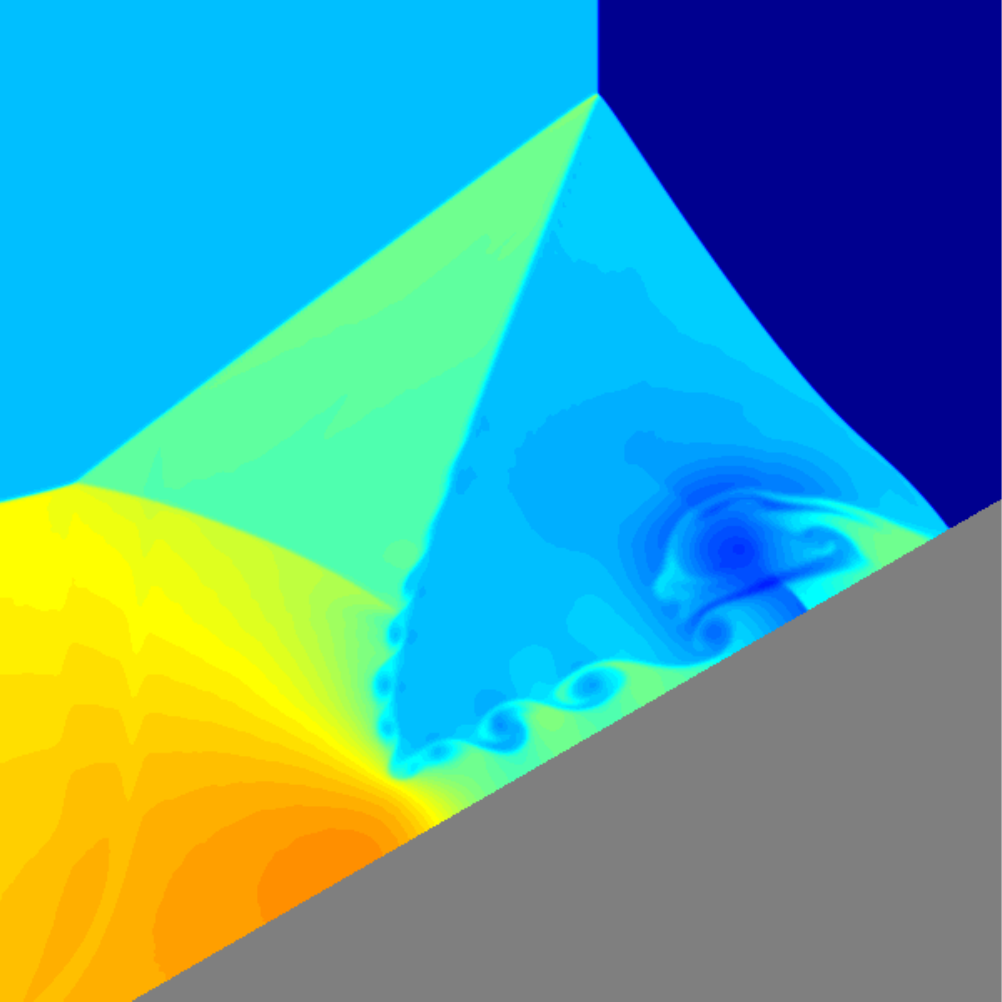}}\\
\subfloat[5th o. extrapolations. $\delta=\delta'=0.35.$]{\label{figg:5}\includegraphics[width=0.45\textwidth]{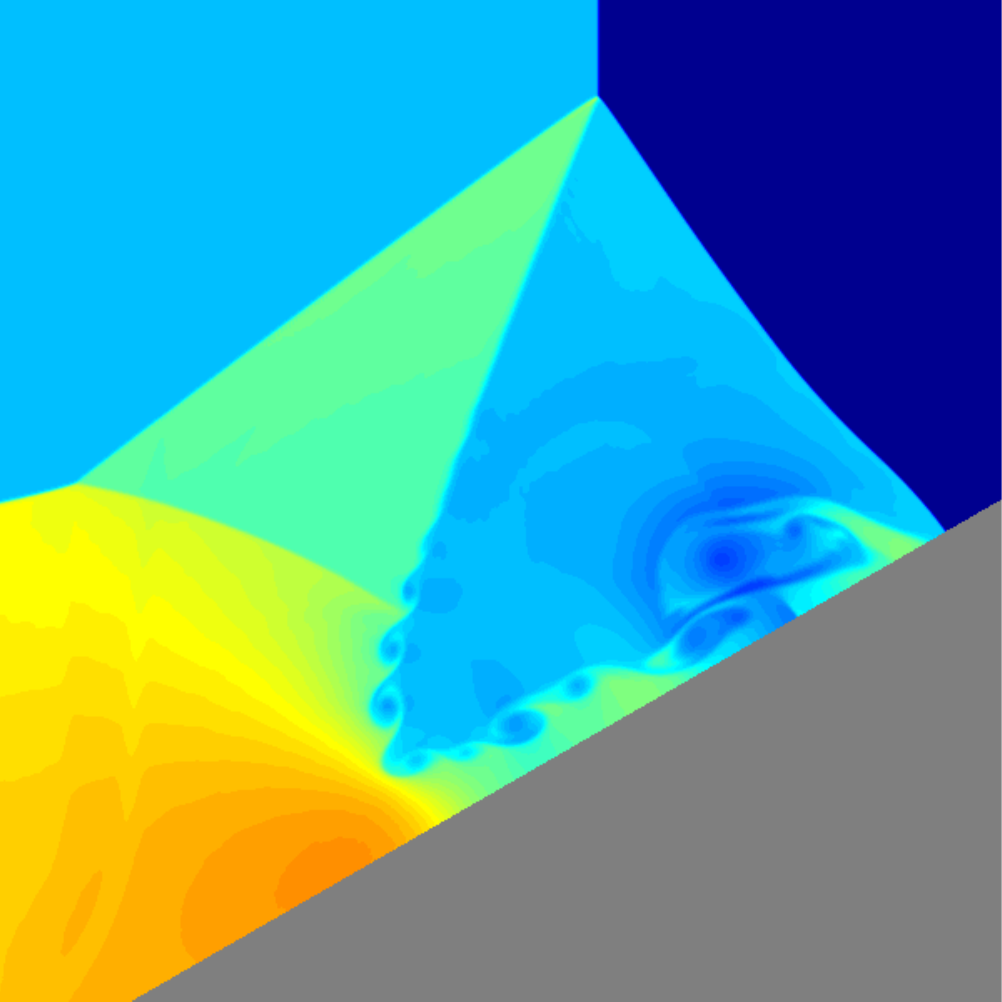}}
\subfloat[5th o. extrapolations. $\delta=\delta'=0.2.$]{\label{figg:6}\includegraphics[width=0.45\textwidth]{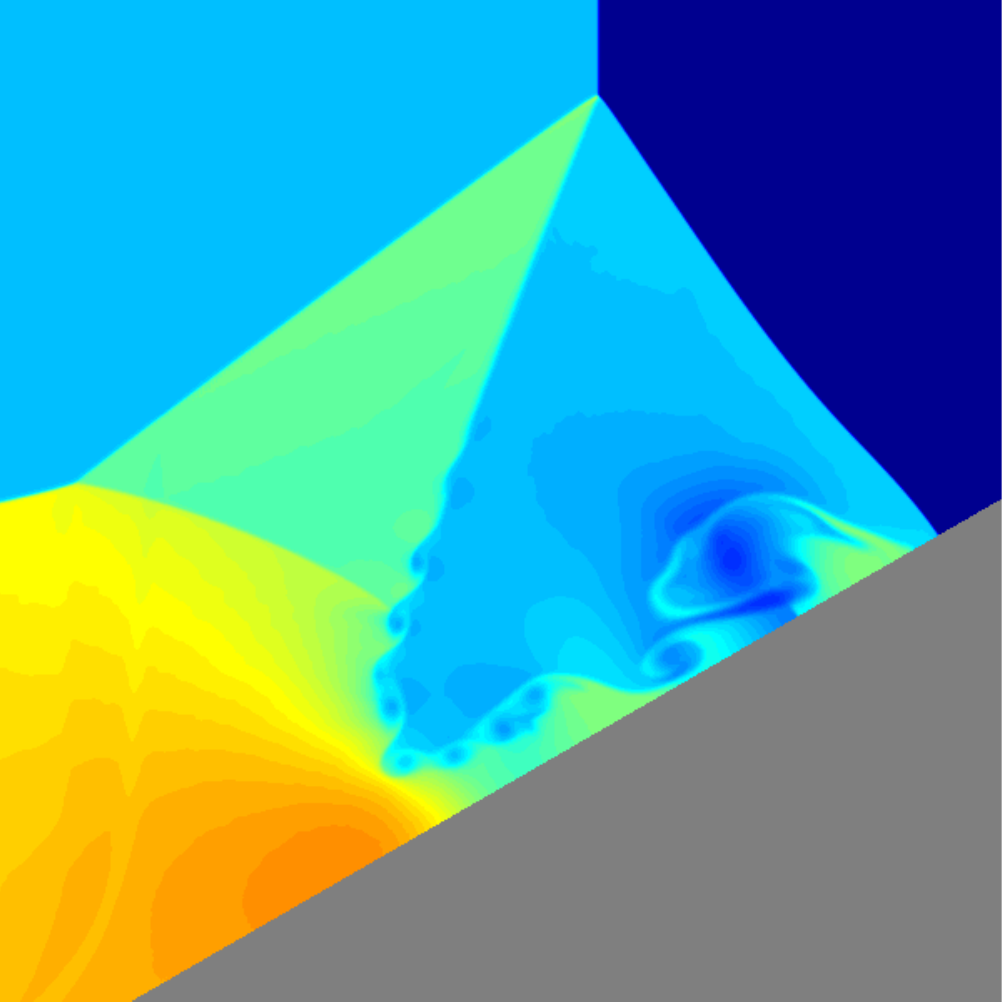}}
\caption{Enlarged view of the turbulence zone.}
\label{ampl}
\end{figure}
\begin{figure}[htb]
\centering
\subfloat[Rotated domain.]{\label{figgg:1}\includegraphics[width=0.45\textwidth]{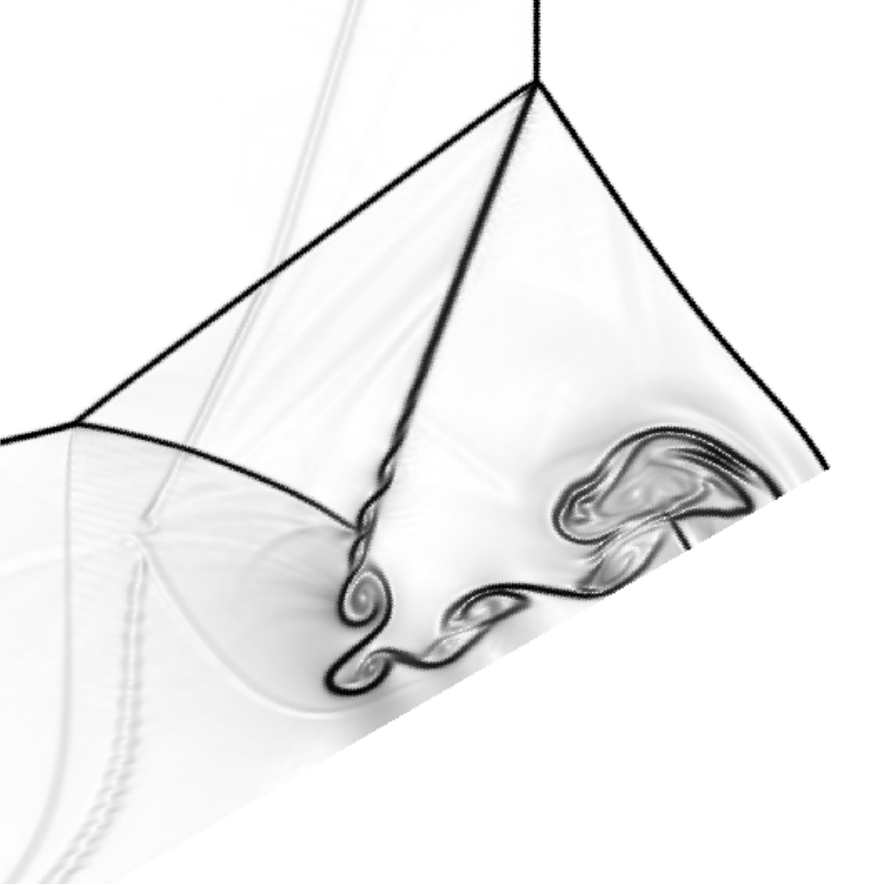}}
\subfloat[1st o. extrapolations.]{\label{figgg:2}\includegraphics[width=0.45\textwidth]{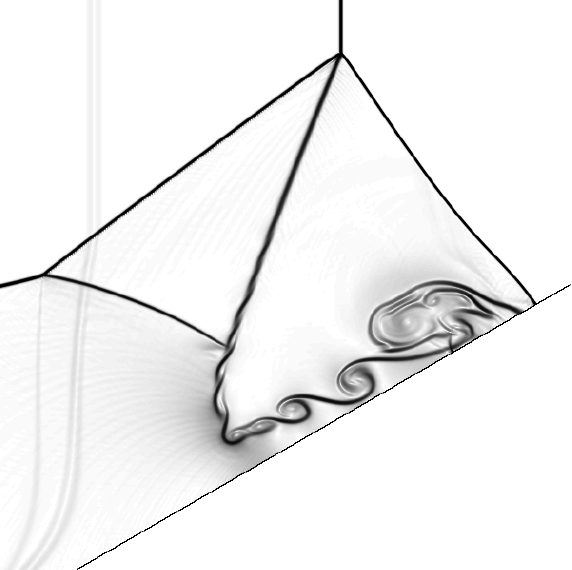}}\\
\subfloat[5th o. extrapolations. $\delta=\delta'=0.9.$]{\label{figgg:3}\includegraphics[width=0.45\textwidth]{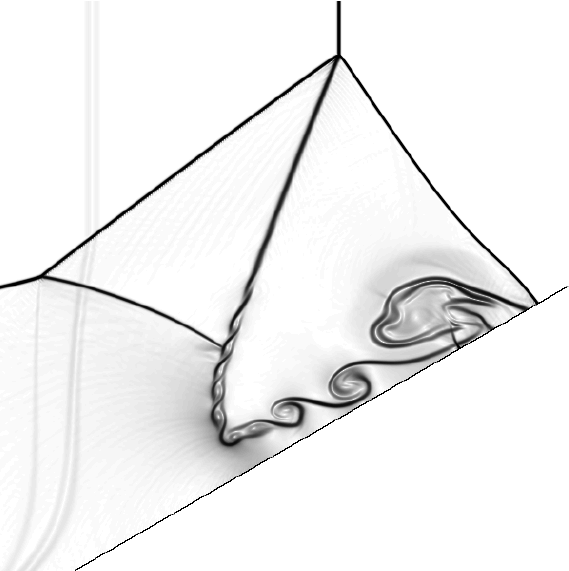}}
\subfloat[5th o. extrapolations. $\delta=\delta'=0.5.$]{\label{figgg:4}\includegraphics[width=0.45\textwidth]{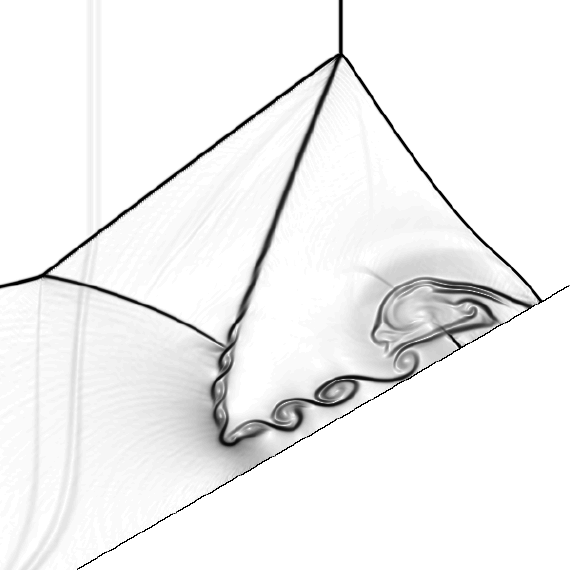}}\\
\subfloat[5th o. extrapolations. $\delta=\delta'=0.35.$]{\label{figgg:5}\includegraphics[width=0.45\textwidth]{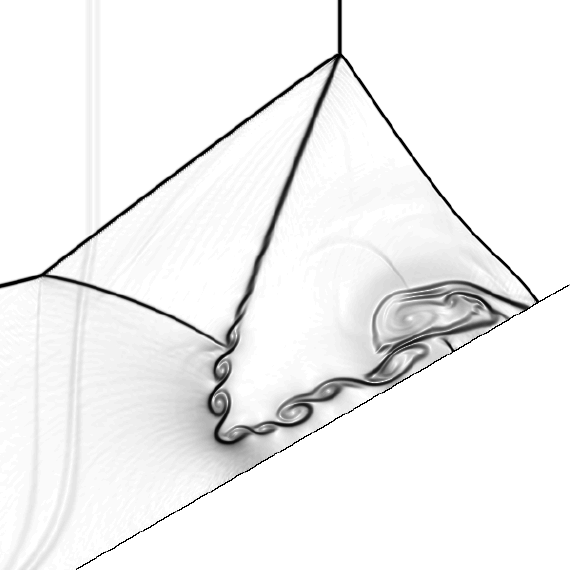}}
\subfloat[5th o. extrapolations. $\delta=\delta'=0.2.$]{\label{figgg:6}\includegraphics[width=0.45\textwidth]{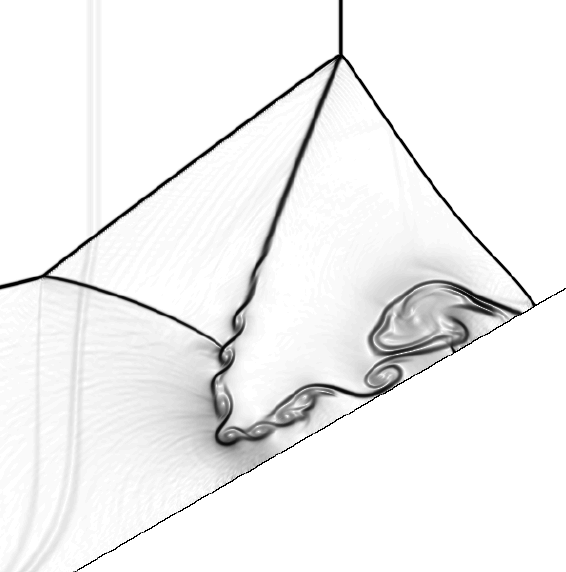}}
\caption{Enlarged view of the turbulence zone (Schlieren).}
\label{amplsch}
\end{figure}
  The Schlieren plots shown in Figures \ref{amplsch}, \ref{circle} and \ref{circles} display the
  gradients of the density field in an exponential scale in a
  gray scale, where darker
  tonalities correspond to higher values (see \cite{MarquinaMulet03}
  and references therein for details).

  As it can be seen, lower threshold values lead to better defined
  vortices. Also, note that according to the results on the
    figures, the rotated problem (with first order extrapolations)
    looks better than the original problem using also first order
    extrapolations. One of the reasons might be the fact that the cell
    centers are exactly located on each normal line, leading to exact
    values on the first step of the extrapolation process (we recall
    the reader that this step consists in extrapolating information to
    points on normal lines from the values of the computational domain).

\subsubsection{Interaction of a shock with a circular obstacle}

We now change our data to a right-going vertical Mach 3 shock initially located at $x=0.1$ with a
circular obstacle with center $(0.5,1)$ and radius 0.2  into a square domain
$(0,2)\times(0,2)$. This experiment has already been performed in
\cite{Boiron} using penalization techniques.
\begin{figure}[hbpt]
      \centering
      \begin{tabular}{cc}
\hspace{-5pt}        \includegraphics[height=6cm]{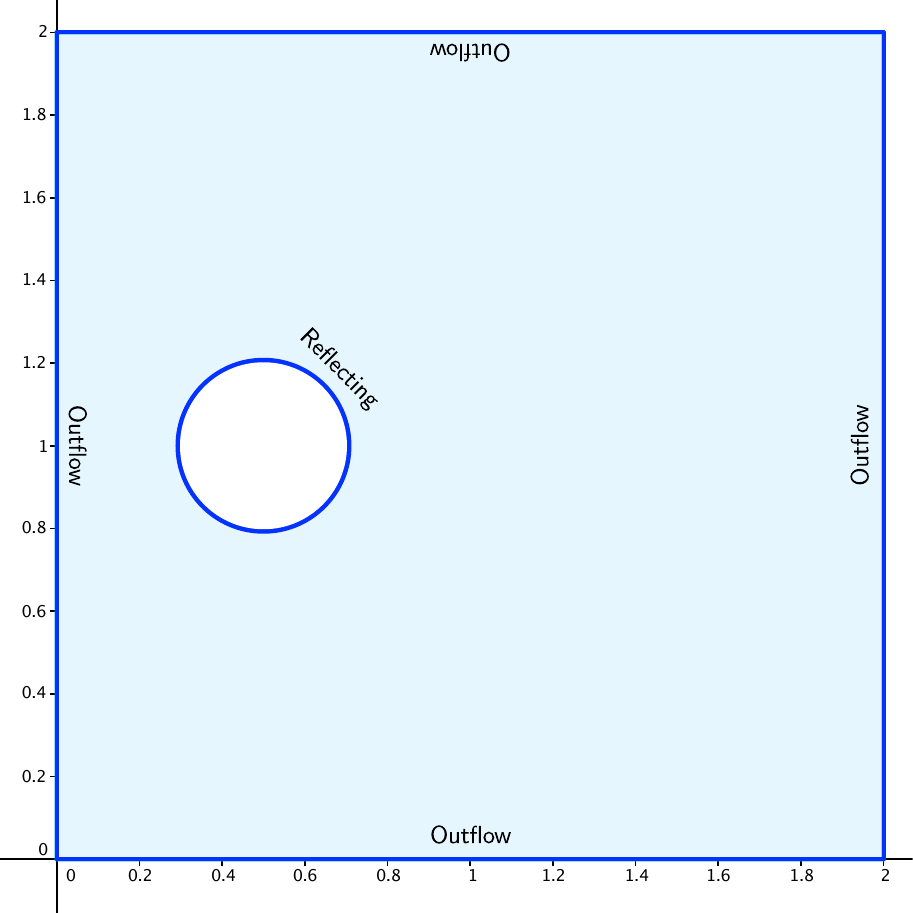} &\hspace{-10pt}\includegraphics[height=6cm]{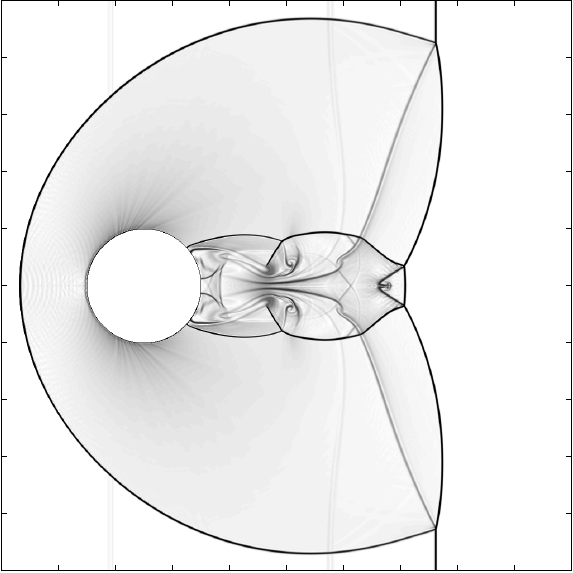}\\
      (a) & (b)
    \end{tabular}
      \caption{Circle reflection test: (a) domain; (b) simulation for $t=0.4$.}
      \label{circle}
\end{figure}
To halve the computational time by exploiting the symmetry of the
solution,
we run a simulation until $t=0.4$ and a mesh size of
$h_x=h_y=\frac{1}{512}$ on the upper half of the
domain, by adding reflecting boundary conditions at the bottom. A Schlieren
plot of the result can be seen at Figure \ref{circle}.
As it can be seen, the results are very similar to those obtained in
\cite{Boiron}.

\subsubsection{Interaction of a shock with multiple circular obstacles}
We repeat the previous experiment by adding multiple circles in the
domain as shown in Figure \ref{circles}. This test can also be found in
\cite{Boiron}.
In this case, we run
the simulation until $t=0.5$ and a mesh size of
$h_x=h_y=\frac{1}{512}$ on the whole domain.
As in the previous experiment, we present a Schlieren plot for the last
time step in Figure \ref{circles}.
These results are again consistent   with those obtained in \cite{Boiron}.
\begin{figure}[hbpt]
      \centering
      \begin{tabular}{cc}
\hspace{-5pt}        \includegraphics[height=6cm]{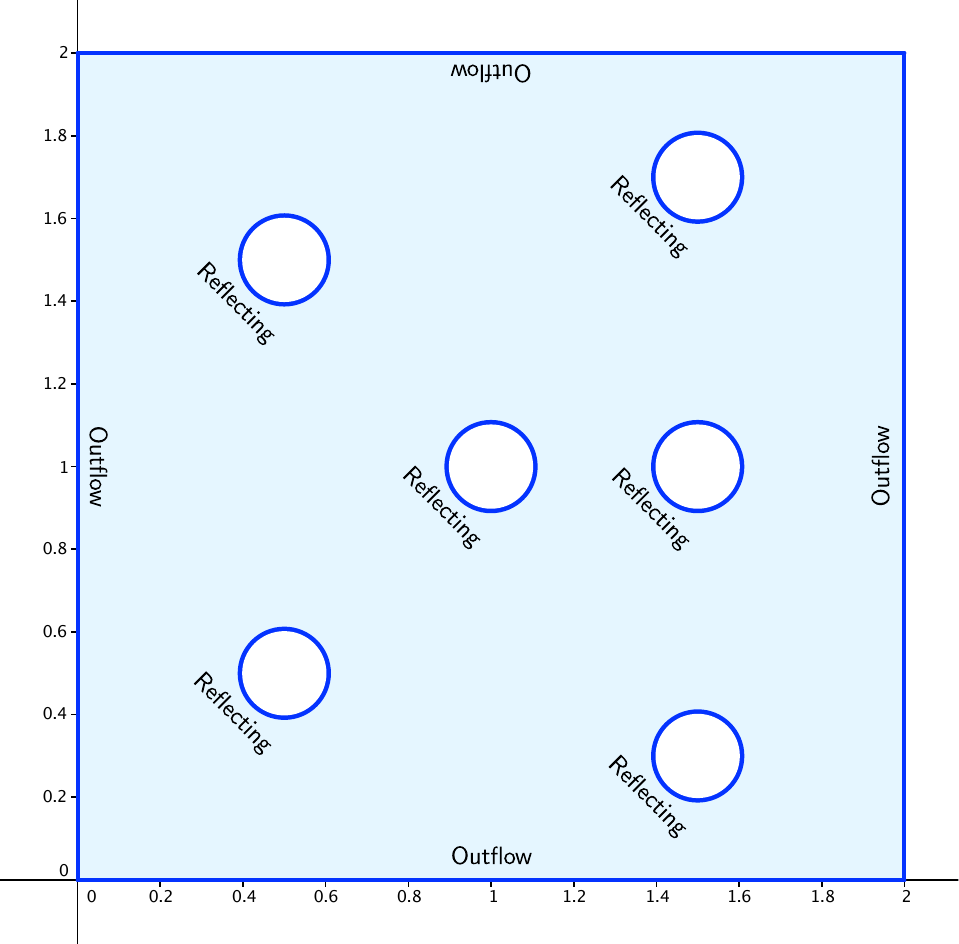}&\hspace{-10pt}\includegraphics[height=6cm]{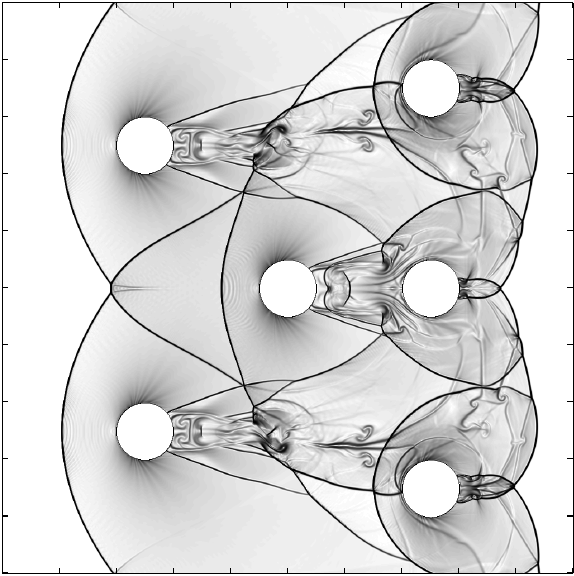}\\
      (a) & (b)
    \end{tabular}
      \caption{Circles reflection test: (a) domain; (b) simulation for $t=0.5$.}
      \label{circles}
\end{figure}

\section{Conclusions and future work}\label{scn}
In this paper we have presented some techniques for data extrapolation  to
handle boundary conditions for finite difference numerical methods for
hyperbolic conservation laws. We have obtained some  successful simulations
in non rectangular domains. This illustrates that Lagrange
extrapolation is a viable technique for filling-in auxiliary data at
the ghost cells, as long as sufficient care is taken for accounting
for possible discontinuities.

Furthermore, these techniques are designed to avoid an order loss at
the boundary of complex domains in methods that require Cartesian
meshes, a loss that can propagate to the rest of the data, thus
notably decreasing the simulation quality. The results that have been
obtained with these techniques entail an improvement that solves the
previous problem without a significant increase in computational
time at not excessively low resolutions.

The extrapolation techniques proposed in this paper have the advantage
of letting a regulation of the tolerance to some variations at the
boundary by using a threshold parameter. This, besides being data
scale independent,  is useful in simulations
with strong turbulence or, in general, with wide regions where the data
is not smooth.
However, the need of tuning the thresholding parameters to the
  particular problem represents a drawback of the method.

Therefore, as future work we encompass studying the design of
weighting methods, akin to WENO, that let performing extrapolations
taking advantage of the idea that has been used for extrapolation
based on a threshold parameter, by assigning a convenient weight in
each case, without needing a Boolean choice as in the case of using
thresholds.

We are also regarding, on the other hand, the implementation of this
methodology of boundary extrapolation techniques to Adaptive Mesh
Refinement algorithms \cite{BaezaMulet2006,BergerColella1989}, trying not to excessively compromise their
recognized efficiency on rectangular domains.

\appendix
\section{Computations of intersections and normals for
  meshing}\label{ap:mesh}
We have implemented a method to automatically compute  the interior
cells to $\Omega$, the ghost cells and the normal (to $\partial
\Omega$) vectors associated to them. We require a parametric
definition of the curve described by  $\partial\Omega$.

The knowledge of the Lipschitz constants of this curve is enough
for computing the intersections of the boundary with the mesh lines.
Indeed, with the notation
$\alpha=(\alpha_x,\alpha_y):[a,b]\rightarrow\mathbb{R}^2$ for the curve
described by
$\partial\Omega$, if $L_x$ and $L_y$ are upper bounds
for the Lipschitz constants of the respective components of $\alpha$,
then the values
$\Delta_xs:=\frac{h_x}{L_x}, \Delta_ys:=\frac{h_y}{L_y}$
satisfy
$$|\alpha_x(s+\Delta_xs)-\alpha_x(s)|\leq
h_x\quad \forall s\in[a,b-\Delta_xs]$$
$$|\alpha_y(s+\Delta_ys)-\alpha_y(s)|\leq
h_y\quad \forall s\in[a,b-\Delta_ys].$$
If we detect that a vertical or horizontal mesh line passes through
two values of the parameter, we use Newton-Raphson's method, combined
with bisection to safely approximate the intersection.

A similar strategy is used for the computations of the normals. In
this case, given a ghost cell center, $(x,y)$, we start from the
parameter value of the intersection that has generated that cell,
$s_0$, for this should not be far from the value of the parameter that
corresponds to the point $N(x, y)\in\partial \Omega$ that determines
the normal vector to the boundary passing by $(x, y)$.

Our search criterion is based on the sign of the scalar product of the
tangent vector, $(\alpha'_x(s), \alpha'_y(s))$ and the vector formed
by  $(\alpha_x(s), \alpha_y(s))$ and  $(x,y)$. At any rate, we aim to
find  $s$ satisfying the equation
\begin{equation}
  \label{eq:normal}
  f(s):=\langle(\alpha'_x(s),\hspace{0.1cm}\alpha'_y(s)),\hspace{0.1cm}(\alpha_x(s)-x,\hspace{0.1cm}
\alpha_y(s)-y)\rangle=0.
\end{equation}
To accomplish this, we start from  $s=s_0$ and evaluate this
expression at points at each side of $s_0$ until a sign change in $f$
is found. At this point we can use Newton-Raphson's method, together
with bisection, to safely approximate the solution of
\eqref{eq:normal} to the required precision.

\section{Linear stability analysis for inflow boundary
  conditions}
\label{ap:stability}
Since a GKS stability analysis (see \cite{GKS72}) is out of the scope of
this paper,
we perform a simpler stability analysis to illustrate the strong
influence of the spacing of the closest node to the boundary in the
stability of the global numerical method.

For the sake of the exposition, we consider a linear advection equation
$$u_x+u_t=0$$
defined on $\Omega=(0,1)$ and inflow condition at $x=0$, $u(0,
t)=g(t)$. Let $h_x>0$, $\Delta t>0$,
$k\in(0,1]$ and define our grid points as $x_j:=(k+j)h_x,$
$0\leq j\leq E(\frac{1}{h_x})$ and $t_n:=n\Delta t$.

For solving numerically the equation, we use a first order upwind
scheme:
$$U_j^{n+1}=U_j^n-\frac{\Delta t}{h_x}(U_j^n-U_{j-1}^n)$$
Assume we want to use linear interpolation at the
boundary. This involves the inflow condition itself and the node
$U_0^n$ at each time step $n$. Naming $G^n:=g(t_n)$, if we want to
extrapolate at $x_{-1}=(k-1)h_x$, where no information is available, at time
step $n$ using these two points, it can be shown by performing
a linear extrapolation that
$$U_{-1}^n=\frac{1}{k}G^n+\frac{k-1}{k}U_0^n.$$

Since we want to focus on a stability analysis around the left boundary, where
inflow conditions are prescribed, the scheme at $x_0$ reads

$$U_0^{n+1}=U_0^n-\frac{\Delta t}{h_x}(U_0^n-U_{-1}^n)=
U_0^n-\frac{\Delta
  t}{h_x}\left(U_0^n-\frac{1}{k}G^n+\frac{1-k}{k}U_0^n\right)=$$
$$=\left(1-\frac{\Delta t}{kh_x}\right)U_0^n+\frac{\Delta
  t}{kh_x}G^n=U_0^n-\frac{\Delta t}{kh_x}\left(U_0^n-G^n\right).$$

With this, following a standard Von Neumann stability analysis from this
  new scheme, a necessary condition for stability is
$$0\leq\frac{\Delta t}{kh_x}\leq 1,$$
which is equivalent to
$$k\geq\frac{\Delta t}{h_x}=\text{CFL}.$$

\section{Accuracy of schemes obtained from extrapolation at ghost
  cells}
\label{ap:accuracy}
Related to Section \ref{ss:fdweno}, let us denote
\begin{align*}
  \Phi(a_{-p-1},\dots,a_q)&=\hat{f}(a_{-p},\dots,a_{q})-\hat{f}(a_{-p-1},\dots,a_{q-1})\\
  U(h)&=(u(x-(p+1)h),\dots,u(x+qh))
\end{align*}
for fixed $x$ and sufficiently smooth $u$, $\hat f$. Then
\eqref{eq:36}  is equivalent to
\begin{equation}\label{eq:39}
  F(h)=\Phi(U(h))=h f(u(x))_x + \bigO(h^{r+1}),
\end{equation}
which is in turn equivalent to
\begin{equation}\label{eq:38}
  F'(0)=f'(u(x))u'(x),\quad F^{(n)}(0)=0,\quad n=2,\dots,r.
\end{equation}
If $\hat f$ were a linear function, these relations would immediately
yield a linear system of equations for its coefficients. In general, a
more complicated formula equivalent to \eqref{eq:38} can be obtained
as follows:
By induction on $n$ the following vectorial version of Fa\`a di
Bruno's Formula \cite{faadibruno1857}  can be proved:
\begin{align}\label{eq:371}
  F^{(n)}(h)=\sum_{m=1}^{n}\sum_{p_1,\dots,p_m=1}^{n-m+1}
  \alpha_{p_1,\dots,p_m}
  \sum_{j_1,\dots,j_m=-p-1}^{q} \frac{\partial^{m}\Phi(U(h))}{\partial
    a_{j_1}\dots \partial a_{j_m}} U_{j_1}^{(p_1)}(h)\dots
  U_{j_m}^{(p_m)}(h),
\end{align}
for suitable and uniquely defined coefficients
$\alpha_{p_1,\dots,p_m}$. From \eqref{eq:37} and the fact that
$U_j^{(\nu)}=j^\nu u^{(\nu)}(x)$, for $j=-p-1,\dots,q$ and $\nu\geq
0$, we deduce that
\begin{equation}\label{eq:37}
  \begin{aligned}
    &F^{(n)}(0)=\sum_{m=1}^{n}\sum_{p_l=1}^{n-m+1}
  \alpha_{p_1,\dots,p_m}
  \sum_{j_l=-p-1}^{q} \Phi^{(j_1,\dots,j_m)}(x)
 j_1^{p_1}\dots   j_m^{p_m}u^{(p_1)}(x)\dots
  u^{(p_m)}(x),\\
  &\Phi^{(j_1,\dots,j_m)}(x)=
  \frac{\partial^{m}\Phi}{\partial
    a_{j_1}\dots \partial a_{j_m}}(u(x),\dots,u(x)).
\end{aligned}
\end{equation}
For $n=1$, \eqref{eq:37} is satisfied with $\alpha_{p_1}=1$:
\begin{equation*}
  F'(0)=
  \Big(\sum_{j_1=-p-1}^{q} j_1  \frac{\partial\Phi}{\partial
    a_{j_1}}(u(x),\dots,u(x))
\Big)
  u'(x).
\end{equation*}
It follows from \eqref{eq:38} that $F'(0)=f'(u(x))u'(x)$ for any
sufficiently smooth $u$ if and only if
\begin{equation*}
\sum_{j=-p-1}^{q} j  \frac{\partial\Phi}{\partial
    a_{j}}(u(x),\dots,u(x))=f'(u(x)),
\end{equation*}
i.e.,
\begin{equation*}
\sum_{j=-p-1}^{q} j  \frac{\partial\Phi}{\partial
    a_{j}}(u,\dots,u)=f'(u).
\end{equation*}
Now, \eqref{eq:39} may be obtained by any means but, as we have just
seen, they are equivalent to \eqref{eq:38}, \eqref{eq:37}, which only depends on
derivatives of $u$ at $x$, as long as $u$ is smooth
enough at a convex open set containing the corresponding stencil.

Assume now that that $u(x-(p+1)h)$ is replaced by, $\widetilde
u(x-(p+1)h)$, the evaluation at
$x-(p+1)h$ of a polynomial that interpolates $u$ at another stencil
$x+p'h,\dots x+q'h$, with $p'\geq -p$. Let us denote
$\overline{q}=\max(q, q')$ and
\begin{align*}
  V(h)&=(\widetilde u(x-(p+1)h),u(x-ph),
\dots,u(x+qh)) = W(u(x-ph),\dots,u(x+\overline{q}h)),\\
G(h)&=\Phi(V(h)).
\end{align*}
If $u$ is
smooth in a convex open set containing $[x-(p+1)h,\dots,x+\overline{q}h]$,
then
\begin{equation*}
  u(x-(p+1)h)-\widetilde{u}(x-(p+1)h)=\bigO(h^s), s\geq 2
\end{equation*}
and, therefore,
\begin{align*}
  F(h)-G(h)=\frac{\partial{\Phi}(\xi_h)}{\partial
    a_{-p-1}}(u(x-(p+1)h)-\widetilde{u}(x-(p+1)h)) = \bigO(h^s).
\end{align*}
It  then follows from \eqref{eq:39} that
\begin{equation}\label{eq:40}
  G(h)=h f(u(x))_x + \bigO(h^{r'+1}), r'=\min(r, s-1),
\end{equation}
under the assumption of $u$ being smooth in a convex open set
containing $[x-(p+1)h,\dots,x+\overline{q}h]$. As argued above, this
is equivalent to
\begin{equation}\label{eq:41}
G'(0)=f(u(x))_x,\quad G^{(n)}(0)=0,\quad n=2,\dots,r',
\end{equation}
which translates into a functional relationship akin to that for
$F$. But now $G$ depends only on the stencil
$x-ph,\dots,x+\overline{q}h$ and therefore \eqref{eq:41} on  $u$ being
smooth in a convex open set containing $[x-ph,\dots,x+\overline{q}h]$,
i.e.,
\begin{equation*}
\frac{G(h)}{h}=f(u(x))_x+\bigO(h^{r'})
\end{equation*}
if $u$ is smooth in a convex open set containing
$[x-ph,\dots,x+\overline{q}h]$.

Therefore, we have concluded that $r$-th order extrapolation applied
to $r$-th order schemes decreases the order of the scheme near the
boundaries to $r-1$. It can be seen that this order is sharp. This is
in apparent contradiction with the results obtained in Section
\ref{ss:1dexperiments}, where fifth order extrapolation applied
together with fifth order schemes yields fifth order global errors,
both in $1$-norm and $\infty$-norm.

We next justify that the global errors are of order $r$  in $1$-norm
if the local error is of order $r+1$ at all but a bounded number (with
respect to $M$) of nodes where the order is a unit less.

Let us denote $v^n_j=u(x_j, t_n)$, $j=1,\dots,M$, for $M$ the total
number of spatial nodes,
 and
$\mathcal{H}=\mathcal{H}_{\lambda,h}$, $\lambda=\Delta t/h$,
the operator of the numerical scheme:
\begin{equation}\label{eq:ap1}
  u^{n+1}=\mathcal{H}u^n,
\quad \mathcal{H}:\mathbb{R}^M\rightarrow\mathbb{R}^M.
\end{equation}

With this notation, the global error is
$e^n=v^n-u^n$
and the local error can be written as
$f_j^n=v_{j}^{n+1}-\mathcal{H}v^n_j,$
which yields:
\begin{equation}\label{eq:ap2}
v^{n+1}=\mathcal{H}(v^n)+f^n.
\end{equation}
Subtracting \eqref{eq:ap1} and \eqref{eq:ap2} and using the mean value
theorem for $\mathcal{H}$ we obtain a relationship
between local and global error:
\begin{equation}\label{eleg}
e^{n+1}= B_ne^n+f^n,
\end{equation}
where
$$
B_n=\int_0^1\mathcal{H}'(u^n+s(v^n-u^n))ds,
$$
which,
by induction on $n$,  yields
\begin{equation}\label{eq:ap3}
e^n=\sum_{k=0}^{n-1}\left(\prod_{m=k+1}^{n-1}B_m\right)f^k.
\end{equation}

Now, we recall that the classical argument to obtain convergence from consistency and
stability proceeds as follows: Given  some fixed $T>0$,
if for $h \in (0, h_0)$,
\begin{equation}\label{eq:ap33}
  \Vert f^k\Vert \leq Ah^{r+1}, \quad \forall  k \text{ such that }
  k\Delta t \leq T,
\end{equation}
and, for
the induced matrix norm, $\Vert
B_m\Vert \leq 1+D h$, then it easily follows that $\Vert e^n\Vert \leq
C h^{r}$ for all $h\in (0, h_0)$ and all $n$ such that $n\Delta t=n\lambda h \leq T$.

This analysis is sufficient for the 1-norm if a bounded number of
nodes $x_j$ satisfy
$f_j^k=\bigO(h^r)$, whereas the rest
satisfy $f_j^k=\bigO(h^{r+1})$, since then $\Vert f_j
\Vert_1=h\sum_{j=1}^{M} |f_j^k| = \bigO(h^{r+1})$.
But this analysis does not explain that $\Vert e^{n}\Vert_{\infty} =
\bigO(h^r)$, since in this case  we have $\Vert f^k\Vert_{\infty}
=\bigO(h^{r})$, i.e., \eqref{eq:ap33} does not hold, so this argument
would yield $\Vert e^{n}\Vert_{\infty} =
\bigO(h^{r-1})$. A finer analysis, based
on the examination of the entries of the vectors involved in
\eqref{eq:ap3},
is then
needed to obtain $\Vert e^{n}\Vert_{\infty} = \bigO(h^r)$. The
analysis of
\begin{equation}\label{eq:ap4}
e_p^n=\sum_{k=0}^{n-1}\sum_{q=1}^{M}\Big(\prod_{m=k+1}^{n-1}B_m\Big)_{pq}f_q^k,
\end{equation}
can be quite involved in general and it is out of the scope of this
work. We will show a particular case in which we obtain global errors
of order 1 (in $\infty$-norm) from local errors with some bounded
number of entries of
order 1 and the rest of order 2. We consider the upwind scheme
\begin{equation*}
  u_{i}^{n+1}=(1-\lambda) u_{i}^{n} + \lambda u_{i-1}^{n}, i=1,\dots, M
\end{equation*}
for the linear advection equation $u_t+u_x=0$ with time dependent
boundary condition at $x=0$, $u(0, t)=g(t)$. The value of
$u_{0}^{n}\approx u(x_0, t_n)$
needs to be supplied according to this boundary condition. Prescribing
$u_{0}^{n}=g(t_n)=u(0, t_n)$ gives a first order approximation of $u(x_0,
t_n)=u(-h/2, t_n)$ and effectively yields a local error of order $r=1$ at
$x_1$:
\begin{equation*}
  |f_1^{n}|=|u(h/2, t_{n}+\Delta t)-(1-\Delta t/h)u(h/2, t_n)-\Delta
  t/h g(t_n)| \leq A_1h,
\end{equation*}
whereas the rest of the entries of the local error satisfy
\begin{equation*}
  |f_j^{n}| \leq Ah^2, j=2,\dots, M,
\end{equation*}
i.e., are of order $r+1=2$, for suitable positive constants $A_1, A$.

For this linear case, all the matrices $B_m$  are
$(1-\lambda)I_{M}+\lambda N_M$, where $N=N_M$ is
the nilpotent matrix  whose powers are given by
$(N^j)_{pq}=\delta_{p-q,j}$, $j\geq 1$, $ 1\leq p, q\leq M$.
We compute now, with the change of variables $k'=n-k-1 \to k$:
\begin{align*}
e_p^n&=\sum_{k=0}^{n-1}\sum_{q}^{M}((1-\lambda)I+\lambda N)^{n-k-1}_{pq}f_q^k
\\
&=\sum_{k=0}^{n-1}\sum_{q=1}^{M}\sum_{j=0}^{n-k-1} \binom {n-k-1}{j}
(1-\lambda)^{n-k-1-j} \lambda^{j}N^{j}_{pq}f_q^k\\
&=\sum_{k=0}^{n-1}\sum_{q=1}^{M}\sum_{j=0}^{n-k-1} \binom {n-k-1}{j}
(1-\lambda)^{n-k-1-j} \lambda^{j}\delta_{p-q,j}f_q^k\\
&=\sum_{k=0}^{n-1}\sum_{q=\max(1,p-n+k+1)}^{p}\binom {n-k-1}{p-q}
(1-\lambda)^{n-k-1-p+q} \lambda^{p-q}f_q^k\\
&=\sum_{k=0}^{n-1}\sum_{q=\max(1,p-k)}^{p}\binom {k}{p-q}
(1-\lambda)^{k-p+q} \lambda^{p-q}f_q^{n-k-1}\\
&=\sum_{q=1}^{p}
\Big(\lambda^{p-q}
\sum_{k=p-q}^{n-1}\binom {k}{p-q}
(1-\lambda)^{k-p+q}\Big) f_q^{n-k-1}.
\end{align*}
We can bound
\begin{equation*}
  \lambda^{s}
\sum_{k=s}^{n-1}\binom {k}{s}
(1-\lambda)^{k-s}
\leq   \lambda^{s}
\sum_{k=s}^{\infty}\binom {k}{s}
(1-\lambda)^{k-s}
\end{equation*}
for $s=p-q$ and use that
$\lambda^{s}
\sum_{k=s}^{n-1}\binom {k}{s}
(1-\lambda)^{k-s}
\leq   \lambda^{s}
\sum_{k=s}^{\infty}\binom {k}{s}
(1-\lambda)^{k-s}=\frac{1}{\lambda}$ (see next lemma) to conclude:
\begin{gather*}
|e_p^n|\leq \frac{1}{\lambda}\sum_{q=1}^{p} |f_q^{n-k-1}|\leq
\frac{1}{\lambda}(
A_1 h+(p-1) Ah^2) \leq
\frac{1}{\lambda}(
A_1 h+M Ah^2)=\frac{A_1+A}{\lambda}h,
\end{gather*}
for any $p=1,\dots,M$, i.e., $\Vert e^n\Vert_{\infty}=\bigO(h^r)$, $r=1$, as
proposed.

\begin{lemma}
  For any $\lambda \in(0,1)$ and $s \in\mathbb N$,
  \begin{equation*}
\sum_{k=s}^{\infty}\binom {k}{s}
(1-\lambda)^{k-s}=    \lambda^{-s-1}.
\end{equation*}

\end{lemma}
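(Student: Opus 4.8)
The plan is to recognize the sum as a specialization of the classical negative-binomial generating function
$$\sum_{k=s}^{\infty}\binom{k}{s}x^{k-s}=\frac{1}{(1-x)^{s+1}},\qquad |x|<1,$$
evaluated at $x=1-\lambda$. Because $\lambda\in(0,1)$ we have $x=1-\lambda\in(0,1)$, so the series converges absolutely and $x$ lies strictly inside the radius of convergence; substituting $1-x=\lambda$ turns the right-hand side into $\lambda^{-s-1}$, which is exactly the claimed value. Hence the entire content of the lemma reduces to the displayed identity, and the role of the hypothesis $\lambda\in(0,1)$ is simply to keep us inside the open unit disk.

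To prove that identity I would differentiate the geometric series $s$ times. Starting from $\sum_{k=0}^{\infty}x^{k}=(1-x)^{-1}$, valid for $|x|<1$, the right-hand side satisfies $\frac{d^{s}}{dx^{s}}(1-x)^{-1}=s!\,(1-x)^{-s-1}$. Differentiating the left-hand side term by term --- which is legitimate since a power series is infinitely differentiable term by term throughout its open disk of convergence --- gives $\sum_{k=s}^{\infty}k(k-1)\cdots(k-s+1)\,x^{k-s}=\sum_{k=s}^{\infty}\frac{k!}{(k-s)!}\,x^{k-s}$. Using $\frac{k!}{(k-s)!}=s!\binom{k}{s}$ and dividing through by $s!$ produces the identity, and substituting $x=1-\lambda$ completes the proof.

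An equally short alternative is induction on $s$: the base case $s=0$ is the geometric series itself, and the inductive step follows by differentiating the case $s-1$ once and using $(k-s+1)\binom{k}{s-1}=s\binom{k}{s}$ (the $k=s-1$ term vanishing), which recovers the case $s$ after dividing by $s$. Either way, I expect no genuine obstacle here: the statement is a textbook power-series identity, and the only point that deserves an explicit word is the justification of term-by-term differentiation, which is guaranteed precisely because $x=1-\lambda$ sits strictly inside the unit disk where such operations are valid.
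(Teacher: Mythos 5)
Your proof is correct, but it follows a genuinely different route from the paper's. You argue analytically: differentiate the geometric series $\sum_{k\geq 0}x^{k}=(1-x)^{-1}$ term by term $s$ times (legitimate strictly inside the unit disk, as you rightly note), divide by $s!$ to obtain the negative-binomial generating function $\sum_{k\geq s}\binom{k}{s}x^{k-s}=(1-x)^{-s-1}$, and evaluate at $x=1-\lambda$. The paper instead stays entirely within algebra: writing $A_s$ for the sum, it applies Pascal's rule $\binom{k}{s}=\binom{k-1}{s}+\binom{k-1}{s-1}$ and reindexes to get $A_s=(1-\lambda)A_s+A_{s-1}$, hence $A_s=\lambda^{-1}A_{s-1}$, and concludes $A_s=\lambda^{-s}A_0=\lambda^{-s-1}$ using only the geometric series for the base case. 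Each approach buys something: yours delivers the stronger statement (the full generating-function identity on all of $|x|<1$) and gets absolute convergence for free from power-series theory, at the cost of invoking term-by-term differentiation, which you correctly single out as the one step requiring justification; the paper's is calculus-free and self-contained, though its series manipulations tacitly presuppose that $A_s$ converges (immediate by the ratio test, but left unstated). Note that your induction alternative is closer in spirit to your main argument than to the paper's, since its inductive step still differentiates, whereas the paper's recurrence is purely combinatorial.
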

\begin{proof}
  Set $A_s=\sum_{k=s}^{\infty}\binom {k}{s}
(1-\lambda)^{k-s}$. From the identity
$\binom{k}{s}=\binom{k-1}{s}+\binom{k-1}{s-1}$, for $0\leq s < k$, and
the change of variables  $k'=k-1$ we get:
\begin{align*}
  \sum_{k=s}^{\infty}\binom {k}{s}
(1-\lambda)^{k-s} &=
1+\sum_{k=s+1}^{\infty}(\binom {k-1}{s}+\binom
{k-1}{s-1})(1-\lambda)^{k-s} \\
&=
1+\sum_{k=s+1}^{\infty}\binom
{k-1}{s}(1-\lambda)^{k-s}+\sum_{k=s+1}^{\infty}\binom
{k-1}{s-1}(1-\lambda)^{k-s}\\
&=
1+(1-\lambda)\sum_{k'=s}^{\infty}\binom
{k'}{s}(1-\lambda)^{k'-s}+\sum_{k'=s}^{\infty}\binom
{k'}{s-1}(1-\lambda)^{k'-s} \\
&=(1-\lambda)\sum_{k'=s}^{\infty}\binom
{k'}{s}(1-\lambda)^{k'-s}+\sum_{k'=s-1}^{\infty}\binom
{k'}{s-1}(1-\lambda)^{k'-s}.
\end{align*}
We obtain that $ A_{s}=(1-\lambda)A_s+A_{s-1}$ i.e.,
$A_{s}=\lambda^{-1}A_{s-1}$, which immediately yields
\begin{equation*}
A_{s}=\lambda^{-s}A_0=\lambda^{-s}\sum_{k=0}^{\infty}(1-\lambda)^k=\lambda^{-s-1}.
\end{equation*}

\end{proof}


\begin{thebibliography}{10}
\providecommand{\url}[1]{{#1}}
\providecommand{\urlprefix}{URL }
\expandafter\ifx\csname urlstyle\endcsname\relax
  \providecommand{\doi}[1]{DOI~\discretionary{}{}{}#1}\else
  \providecommand{\doi}{DOI~\discretionary{}{}{}\begingroup
  \urlstyle{rm}\Url}\fi

\bibitem{SINUM2011}
Aràndiga, F., Baeza, A., Belda, A.M., Mulet, P.: Analysis of {WENO} schemes
  for full and global accuracy.
\newblock SIAM J. Numer. Anal. \textbf{49}(2), 893--915 (2011)

\bibitem{BaezaMulet2006}
Baeza, A., Mulet, P.: Adaptive mesh refinement techniques for high-order shock
  capturing schemes for multi-dimensional hydrodynamic simulations.
\newblock Int. J. Numer. Meth. Fluids \textbf{52}, 455--471 (2006)

\bibitem{BergerColella1989}
Berger, M.J., Colella, P.: Local adaptive mesh refinement for shock
  hydrodynamics.
\newblock J. Comput. Phys. \textbf{82}, 64--84 (1989)

\bibitem{Boiron}
Boiron, O., Chiavassa, G., Donat, R.: A high-resolution penalization method for
  large {M}ach number flows in the presence of obstacles.
\newblock Computers \& Fluids \textbf{38}, 703--714 (2009).
\newblock \doi{10.1016/j.compfluid.2008.07.003}

\bibitem{faadibruno1857}
Faà~di Bruno, C.F.: Note sur un nouvelle formule de calcul différentiel.
\newblock Quart. J. Math. \textbf{1}, 359--360 (1857)

\bibitem{Carpenter}
Carpenter, M., Gottlieb, D., Abarbanel, S., Don, W.S.: The theoretical accuracy
  of {Runge-Kutta} time discretizations for the initial boundary value problem:
  a study of the boundary error.
\newblock SIAM J. Sci. Comput. \textbf{16}, 1241--1252 (1995)

\bibitem{DonatMarquina96}
Donat, R., Marquina, A.: Capturing shock reflections: An improved flux formula.
\newblock J. Comput. Phys. \textbf{125}, 42--58 (1996)

\bibitem{GodlewskiRaviart96}
Godlewski, E., Raviart, P.A.: Numerical approximation of hyperbolic systems of
  conservation laws, \emph{Applied Mathematical Sciences}, vol. 118.
\newblock Springer-Verlag, New York (1996).
\newblock \doi{10.1007/978-1-4612-0713-9}.
\newblock \urlprefix\url{http://dx.doi.org/10.1007/978-1-4612-0713-9}

\bibitem{GustafssonKreissOliger95}
Gustafsson, B., Kreiss, H.O., Oliger, J.: Time dependent problems and
  difference methods.
\newblock Pure and Applied Mathematics (New York). John Wiley \& Sons, Inc.,
  New York (1995).
\newblock A Wiley-Interscience Publication

\bibitem{GKS72}
Gustafsson, B., Kreiss, H.O., Sundstr{\"o}m, A.: Stability theory of difference
  approximations for mixed initial boundary value problems. {II}.
\newblock Math. Comp. \textbf{26}, 649--686 (1972)

\bibitem{Harten1987}
Harten, A., Engquist, B., Osher, S., Chakravarthy, S.R.: Uniformly high order
  accurate essentially non-oscillatory schemes, {III}.
\newblock J. Comput. Phys. \textbf{71}(2), 231--303 (1987)

\bibitem{Huang}
Huang, L., Shu, C.W., Zhang, M.: Numerical boundary conditions for the fast
  sweeping high order {WENO} methods for solving the {E}ikonal equation.
\newblock J. Comp. Math. \textbf{26}, 336--346 (2008)

\bibitem{JiangShu96}
Jiang, G.S., Shu, C.W.: Efficient implementation of {Weighted} {ENO} schemes.
\newblock J. Of Comput. Phys. \textbf{126}, 202--228 (1996)

\bibitem{MarquinaMulet03}
Marquina, A., Mulet, P.: A flux-split algorithm applied to conservative models
  for multicomponent compressible flows.
\newblock J. Comput. Phys. \textbf{185}, 120--138 (2003)

\bibitem{OligerSundstrom78}
Oliger, J., Sundstr{\"o}m, A.: Theoretical and practical aspects of some
  initial boundary value problems in fluid dynamics.
\newblock SIAM J. Appl. Math. \textbf{35}(3), 419--446 (1978)

\bibitem{PletcherEtAl2012}
Pletcher, R.H., Tannehill, J.C., Anderson, D.: Computational fluid mechanics
  and heat transfer, third edn.
\newblock CRC Press (2012)

\bibitem{ShuOsher89}
Shu, C.W., Osher, S.: Efficient implementation of essentially non-oscillatory
  shock-capturing schemes.
\newblock J. Comput. Phys. \textbf{77}, 439--471 (1988)

\bibitem{ShuOsher1989}
Shu, C.W., Osher, S.: Efficient implementation of essentially non-oscillatory
  shock-capturing schemes, {II}.
\newblock J. Comput. Phys. \textbf{83}(1), 32--78 (1989)

\bibitem{Sjogreen}
Sjogreen, B., Petersson, N.: A cartesian embedded boundary method for
  hyperbolic conservation laws.
\newblock Commun. Comput. Phys. \textbf{2}, 1199--1219 (2007)

\bibitem{TanShu}
Tan, S., Shu, C.W.: Inverse {Lax-Wendroff} procedure for numerical boundary
  conditions of conservation laws.
\newblock J Comput. Phys. \textbf{229}, 8144--8166 (2010)

\bibitem{TanWangShu}
Tan, S., Wang, C., Shu, C.W., Ning, J.: Efficient implementation of high order
  inverse {Lax-Wendroff} boundary treatment for conservation laws.
\newblock J. Comput. Phys. \textbf{231}(6), 2510--2527 (2012)

\bibitem{Colella}
Woodward, P., Colella, P.: The numerical simulation of two-dimensional fluid
  flow with strong shocks.
\newblock J. Comput. Phys. \textbf{54}, 115--173 (1984)

\bibitem{Xiong}
Xiong, T., Zhang, M., Zhang, Y.T., Shu, C.W.: Fast sweeping fifth order {WENO}
  scheme for static {H}amilton--{J}acobi equations with accurate boundary
  treatment.
\newblock J. Sci. Comput. \textbf{45}, 514--536 (2010)

\bibitem{Yee1981}
Yee, H.C.: Numerical approximation of boundary conditions with applications to
  inviscid equations of gas dynamics.
\newblock Tech. Rep. N81-19834, NASA (1981)

\end{thebibliography}
\end{document}